\DeclareMathAlphabet{\mathbbu}{U}{bbold}{m}{n}
\renewcommand{\geq}{\geqslant}
\theoremstyle{bold}
\newtheorem{theorem}{Theorem}[section]
\newtheorem{proposition}[theorem]{Proposition}
\newtheorem{lemma}[theorem]{Lemma}
\newtheorem{corollary}[theorem]{Corollary}
\newtheorem{fact}[theorem]{Fact}
\theoremstyle{definition}
\newtheorem{definition}[theorem]{Definition}
\newtheorem{remark}[theorem]{Remark}
\newtheorem{example}[theorem]{Example}
\DeclareMathOperator{\card}{\mbox{\rm card}}
\DeclareMathOperator{\supp}{\mbox{\rm supp}}
\DeclareMathOperator{\dens}{\mbox{\rm dens}}
\DeclareMathOperator{\Span}{\mbox{\rm span}}
\DeclareMathOperator{\udens}{\overline{\mbox{\rm dens}}}
\DeclareMathOperator{\dd}{\mbox{d}}
\DeclareMathOperator{\sinc}{\mbox{\rm sinc}}
\newcommand{\cA}{\mathcal{A}}
\newcommand{\cB}{\mathcal{B}}
\newcommand{\cL}{\mathcal{L}}
\newcommand{\cM}{\mathcal{M}}
\newcommand{\Fib}{\mbox{\textbf{Fib}}}
\newcommand{\vol}{\mbox{vol}}
\newcommand{\NN}{\mathbb{N}}
\newcommand{\ZZ}{\mathbb{Z}}
\newcommand{\QQ}{\mathbb{Q}}
\newcommand{\RR}{\mathbb{R}}
\newcommand{\CC}{\mathbb{C}}
\newcommand{\KK}{\mathbb{K}}
\newcommand{\eps}{\varepsilon}
\newcommand{\oplam}{\mbox{\Large $\curlywedge$}}
\newcommand{\hoplam}{\mbox{\Large $\curlywedge^\star$}}
\newcommand\reallywidehat[1]{%
\savestack{\tmpbox}{\stretchto{%
  \scaleto{%
    \scalerel*[\widthof{\ensuremath{#1}}]{\kern-.6pt\bigwedge\kern-.6pt}%
    {\rule[-\textheight/2]{1ex}{\textheight}}
  }{\textheight}%
}{0.5ex}}%
\stackon[1pt]{#1}{\tmpbox}%
}
\newcommand\reallywidecheck[1]{%
\savestack{\tmpbox}{\stretchto{%
  \scaleto{%
    \scalerel*[\widthof{\ensuremath{#1}}]{\kern-.6pt\bigwedge\kern-.6pt}%
    {\rule[-\textheight/2]{1ex}{\textheight}}
  }{\textheight}%
}{0.5ex}}%
\stackon[1pt]{#1}{\scalebox{-1}{\tmpbox}}%
}
\newcommand{\Cu}{C_{\mathsf{u}}}
\newcommand{\Cc}{C_{\mathsf{c}}}
\begin{document}
\title{Why do (weak) Meyer sets diffract?}

\dedicatory{We dedicate this work to Daniel Lenz on the occasion of his $50^{th}$ birthday.}

\author{Nicolae Strungaru}
\address{Department of Mathematical Sciences, MacEwan University \\
10700 -- 104 Avenue, Edmonton, AB, T5J 4S2, Canada\\
and \\
Institute of Mathematics ``Simon Stoilow''\\
Bucharest, Romania}
\email{strungarun@macewan.ca}
\urladdr{http://academic.macewan.ca/strungarun/}

\begin{abstract} Given a weak model set in a locally compact Abelian, group we construct a relatively dense set of common Bragg peaks for all its subsets that have non-trivial Bragg spectrum. Next, we construct a relatively dense set of common norm almost periods for the diffraction, pure point, absolutely continuous and singular continuous spectrum, respectively, of all its subsets. We use the Fibonacci model set to illustrate these phenomena.  We extend all these results to arbitrary translation bounded weighted Dirac combs supported within some Meyer set. We complete the paper by discussing extensions of the existence of the generalized Eberlein decomposition for measures supported within some Meyer set.
\end{abstract}

\maketitle

\section{Introduction}

Physical quasicrystals were discovered in the 1980's by Dan Shechtman \cite{She}. Shechtman's discovery forced the International Union of Crystallography to redefine a crystal to be ``any solid having an essentially discrete diffraction diagram” \cite{IUCR}. While the word ``essentially" is vague in this context, it is usually understood to mean a relatively dense set of Bragg peaks.

The largest class of mathematical models which is easy to classify, and has a relatively dense set of Bragg peaks, is the class of Meyer sets \cite{NS1}. Meyer sets were introduced by Y. Meyer \cite{Meyer} in the 1970's as approximate solutions to an infinite system of linear equations in $\RR/\ZZ$. They are produced via a cut-and-project scheme as relatively dense subsets of (regular) model sets, and can be characterised via harmonic, discrete geometry and algebraic properties \cite{LAG1,Meyer,MOO,Moody,NS11}.
Their connection to aperiodic order was observed in the 1990 by Lagarias \cite{LAG1,LAG} and Moody \cite{MOO,Moody}. In recent years, many properties about their diffraction have been proven \cite{NS1,NS2,NS5,NS11,NS20a}. All these results can be traced back to the long-range order of the lattice $\cL$ in the cut-and-project scheme, and to the compactness of a covering window. Physicists have also shown interest in these results about the spectra of Meyer sets (see \cite{BCG,GQ} for example).

It was shown by Hof \cite{Hof2,HOF3} that Euclidean model sets with nice windows have a clear diffraction pattern (pure point). The result was generalized to regular model sets in second countable LCAG by Schlottmann \cite{Martin2}, via the usage of dynamical systems and the Dworkin argument (see \cite{BL,Dwo,LS} for background). Baake and Moody gave an alternate proof of this result via almost periodicity \cite{BM} (compare \cite{LR}) and emphasized the deep connection between almost periodicity and long range order. In \cite{CRS,CRS2}, we showed that the diffraction formula for regular model sets is just the Poisson Summation Formula (PSF) for the underlying lattice, applied twice. This result shows explicitly how the clear diffraction diagram of a regular model set is just a residue of the strong order of the lattice in the CPS. Recently, the pure point diffraction of model sets was shown via the almost periodicity of the Dirac comb of the model set \cite{LSS,LSS2,MEY} (compare \cite{LAG}).

As subsets of highly ordered sets, Meyer sets should show some signs of long range order, and they indeed do. They show a relatively dense Bragg spectrum \cite{NS1,NS2,NS11}, which is highly ordered \cite{NS2,NS11}. The continuous diffraction spectrum is either empty or has a relatively dense support \cite{NS1,NS5,NS20a}. Moreover, each of the absolutely continuous and singular continuous spectra is a norm almost periodic measure \cite{NS20a}. Some of these results are important in the study of Pisot substitutions (see for example \cite{BaGa,BG2}).

All these results have been proved by two different techniques:
\begin{itemize}
  \item{} by using harmonic properties of a covering model set;
  \item{} by using the relative denseness of the sets $\Lambda^\eps$ of $\eps$-dual characters.
\end{itemize}
In fact, it turns out that these approaches are not that different. Indeed, if we know a covering model set, we can explicitly write out a relatively dense set of $\eps$-dual characters, which only depends on the covering model set.

The main goal of this paper is to reanalyze the diffraction spectra of a Meyer set via a study of harmonic properties of a covering model set $\oplam(W)$. This allows us prove diffraction results for Meyer sets, that are stronger than the earlier work in this direction \cite{NS1,NS2,NS5,NS11,NS20a}. Moreover, the results are uniform in the following sense:
\begin{itemize}
  \item{} We can find a relatively dense set of common Bragg peaks for all the Meyer sets with non-trivial Bragg spectrum, which are subsets of the same model set.
  \item{} For each $\eps >0$, we can find a relatively dense set $P_\eps \subseteq G$, which depends on the covering model set only, such that, for every Meyer subset $\Lambda \subseteq \oplam(W)$, the elements of $P_\eps$ are norm almost periods for the diffraction spectrum, pure point diffraction spectrum, continuous diffraction spectrum, and the absolutely continuous diffraction spectrum of $\Lambda$.
\end{itemize}
The relatively dense set of common Bragg peaks can be calculated explicitly in terms of $\oplam(W)$. Furthermore, the sets $P_\eps$ can be calculated explicitly in terms of $\oplam(W)$ and $\eps>0$. To emphasize this phenomena, we calculate these sets explicitly for the Fibonacci model set.

It is worth mentioning that the proofs in this paper are simpler than the original proofs of \cite{NS1,NS2,NS5,NS11,NS20a}, and do not rely on the relatively denseness of Meyer sets. Therefore, while relative denseness plays a crucial role in the classification of Meyer sets \cite{BLM,LAG1,Meyer,Moody,NS11}, it is not important in the current paper. This allows us generalize the previous results to the larger class of subsets of model sets with non-trivial Bragg peak at the origin. For this reason, in this paper we study weak Meyer sets, meaning arbitrary subsets of (weak) model sets.

\smallskip
The paper is organized as follows. We start by using the Fibonacci model set, and its subsets, as motivational example in Section~\ref{sect:fib}. Next, given a weak model set $\oplam(W)$, we construct in Theorem~\ref{thm bragg meyer set} a relatively dense set $B \subseteq \widehat{G}$, such that all subsets of $\oplam(W)$ with non-trivial Bragg spectrum have Bragg peaks at all points in $B$. We furthermore give a lower bound for the intensity of the Bragg peak at any $\chi \in B$, in terms of the Bragg peak at 0; a lower bound which only depends on the covering model set $\oplam(W)$. This lower bound can be then used to construct an explicit set of high intensity Bragg peaks, simultaneously for all the subsets $\Lambda \subseteq \oplam(W)$ with non-trivial Bragg spectrum.

Next, we review and briefly improve the ping-pong lemma for Meyer sets (Theorem~\ref{ping pong theorem}). This result is the key to the study of the continuous component of the diffraction spectrum of weak Meyer sets. The ping-pong lemma can also be used to study the Bragg, absolutely continuous and singular continuous component of weighted complex combs with weak Meyer set support. By using the ping-pong lemma, we show that for every weak model set $\oplam(W)$, each compact $K\subseteq \widehat{G}$, and each $\eps$, there exists a relatively dense set $P_{\eps}(\oplam(W),K)$ of points which are $\eps$ - $\| \cdot \|_{K}$ norm almost periods for the diffraction, pure point, absolutely continuous and singular continuous diffraction component for every weak Meyer set $\Lambda \subseteq \oplam(W)$.

We complete the paper by generalizing the existence of the generalized Eberlein decomposition for measures with weak Meyer set support to a much larger class of decompositions. We introduce the notion of FCDM functions (see Definition~\ref{def FCDM}), and show that for every Fourier transformable measure $\gamma$ supported inside a weak model set $\oplam(W)$ and every FCDM function $P: \cM^\infty(\widehat{G}) \to \cM^\infty(\widehat{G})$, there exists a Fourier transformable measure $\gamma_P$ supported inside $\oplam(W)$ such that $\widehat{\gamma_P}=P(\widehat{\gamma})$.

Since the projections on the pure point, continuous, absolutely continuous, and singular continuous component are FCDM functions, the existence of the generalized Eberlein decomposition is an immediate consequence of this result.

\section{Fibonacci model set}\label{sect:fib}

We start by looking at a well known example, which we use as motivation for the remaining of the paper. Let us start by briefly recalling the Fibonacci model set, and refer the reader to \cite[Chapter~7]{TAO} for more details.

Consider
\begin{displaymath}
\cL:=\left\{ \begin{bmatrix}m+n \tau \\ m+n\tau'\end{bmatrix} : m,n \in \ZZ \right\}= \ZZ \begin{bmatrix} 1 \\ 1 \end{bmatrix} \oplus \ZZ \begin{bmatrix} \tau \\ \tau' \end{bmatrix}  \subseteq \RR^2  \,,
\end{displaymath}
where $\tau=\frac{1+\sqrt{5}}{2}$ and $\tau'=\frac{1-\sqrt{5}}{2}$. This is a lattice in $\RR \times \RR$, with dual lattice
\begin{equation}\label{eq:dual fib lattice}
\cL^0= \frac{1}{\sqrt{5}}\ZZ \begin{bmatrix} -\tau'  \\ \tau  \end{bmatrix} \oplus \frac{1}{\sqrt{5}} \ZZ \begin{bmatrix} 1 \\ -1 \end{bmatrix}=  \left\{ \begin{bmatrix} \frac{n-m\tau'}{\sqrt{5}} \\ \frac{m \tau -n}{\sqrt{5}} \end{bmatrix} : m,n \in \ZZ \right\}  \,.
\end{equation}
Let $\star$ denote the restriction to $\ZZ[\tau]$ of the Galois conjugation of the field $\QQ(\tau)$, that is
\[
(m+n \tau)^\star=m+n \tau' \,.
\]
We can now recall the following definition:
\begin{definition} If $I \subseteq \RR$ is any bounded interval we will denote
\[
\oplam(I) = \{ x : x \in \ZZ[\tau], x^\star \in I \} \,.
\]
The set $\oplam([-1, \tau-1))$ is called the \textbf{Fibonacci model set} and is simply denoted by $\Fib$.
\end{definition}
Given an interval $J \subseteq \RR$ we also denote
\begin{eqnarray*}
  \hoplam(J) &=& \left\{ \frac{n-m\tau'}{\sqrt{5}} : m,n \in \ZZ, \frac{m \tau -n}{\sqrt{5}}  \in J \right\} \,.
\end{eqnarray*}
Here, we use the $\hoplam$ to emphasize that this model set uses the dual lattice $\cL^0$.

\subsection{Bragg spectrum of subsets of the Fibonacci model set}

The critical property relating these models sets is the following.

\begin{proposition}\label{prop epsilon dual} Let $a,b \in (0, \infty)$. If $x \in \oplam([-a,a])$ and $y \in \hoplam([-b,b])$ then
\begin{displaymath}
\left| e^{-2 \pi  i x \cdot y } -1 \right| \leq 2 \pi ab \,.
\end{displaymath}
\end{proposition}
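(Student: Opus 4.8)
The plan is to exploit the duality between the lattice $\cL$ and its dual $\cL^0$, which converts the physical-space phase $x\cdot y$ into the internal-space phase $x^\star y^\star$ modulo $\ZZ$. First I would lift both points to genuine lattice vectors in $\RR^2$. By definition, $x \in \oplam([-a,a])$ means $x \in \ZZ[\tau]$ with $x^\star \in [-a,a]$, so the vector $(x, x^\star)$ lies in $\cL$. Likewise, writing $y = \frac{n-m\tau'}{\sqrt 5}$ and $y^\star = \frac{m\tau - n}{\sqrt 5}$, the hypothesis $y \in \hoplam([-b,b])$ says precisely that $(y, y^\star) \in \cL^0$ with $y^\star \in [-b,b]$.

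Next I would invoke the defining property of the dual lattice. Since $(x,x^\star) \in \cL$ and $(y,y^\star) \in \cL^0$, the standard inner product satisfies $e^{2\pi i((x,x^\star)\cdot(y,y^\star))} = 1$, that is, $xy + x^\star y^\star \in \ZZ$. Consequently $-xy \equiv x^\star y^\star \pmod{\ZZ}$, and therefore $e^{-2\pi i x y} = e^{2\pi i x^\star y^\star}$. This single observation is the crux: it trades the uncontrolled product $xy$ of two potentially large real numbers for the product $x^\star y^\star$ of two numbers that the two windows keep small.

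Finally I would close with an elementary estimate. Using $|e^{i\theta}-1|\le|\theta|$ (from $|e^{i\theta}-1|=2|\sin(\theta/2)|\le|\theta|$) with $\theta = 2\pi x^\star y^\star$, together with the bounds $|x^\star|\le a$ and $|y^\star|\le b$ supplied by the two windows, I would obtain
$$\left| e^{-2 \pi i x \cdot y } -1 \right| = \left| e^{2 \pi i x^\star y^\star} - 1 \right| \le 2\pi\, |x^\star y^\star| \le 2\pi ab \,,$$
which is the asserted bound.

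I do not expect a genuine obstacle here; the entire content sits in the duality reduction of the second step, which is exactly the residue of the long-range order of $\cL$ (a one-fold use of the duality underlying the Poisson Summation Formula that the introduction emphasizes). The only points demanding care are the bookkeeping that identifies $x$ and $y$ with their lattice lifts $(x,x^\star)$ and $(y,y^\star)$, and reading off the internal coordinate on the $\cL^0$ side, namely $y^\star=\frac{m\tau-n}{\sqrt 5}$, correctly from the parametrization in~\eqref{eq:dual fib lattice}.
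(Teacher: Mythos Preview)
Your proposal is correct and follows essentially the same route as the paper: lift to $\cL$ and $\cL^0$, use the duality relation to replace $e^{-2\pi i xy}$ by $e^{2\pi i x^\star y^\star}$, and then bound via $|e^{i\theta}-1|=2|\sin(\theta/2)|\le|\theta|$ together with $|x^\star|\le a$, $|y^\star|\le b$. The paper merely writes out the trigonometric identity $|1-e^{i\theta}|=\sqrt{2-2\cos\theta}=2|\sin(\theta/2)|$ explicitly, but the argument is identical.
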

\begin{proof}
Since $(x,x^\star) \in \cL$ and $(y,y^\star) \in \cL^0$ we have by \eqref{eq:dual fib lattice} that
\begin{displaymath}
e^{-2 \pi  i (x\cdot y+x^\star\cdot y^\star) } =1 \,.
\end{displaymath}
It follows that
\begin{align*}
\left| e^{-2 \pi  i x \cdot y } -1 \right| &= \left| e^{-2 \pi  i x \cdot y } -e^{-2 \pi  i (x \cdot y+x^\star\cdot y^\star) } \right|=\left|1 -e^{-2 \pi  i (x^\star \cdot y^\star) } \right| \\
 = 2 \left|\sin( \pi   (x^\star \cdot y^\star))\right| &\leq 2 \left|\pi   (x^\star \cdot y^\star)\right| \leq 2 \pi a b \,.
\end{align*}
\end{proof}

\medskip

Next we find some rough estimates for $R=R(a)$ such that $\hoplam((-a,a))$ is $R$-relatively dense.

\begin{lemma}\label{hoplam rel dense} Let $0 < a< \frac{\tau}{\sqrt{5}}$. Then,
$$
\hoplam ((-a, a))  +  [0 , \frac{\tau^3}{5a} ] = \RR \,.
$$
\end{lemma}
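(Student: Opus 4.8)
The plan is to recast the statement as a uniform bound on the gaps between consecutive points of $\hoplam((-a,a))$. Listing the points in increasing order, the equality $\hoplam((-a,a)) + [0,R] = \RR$ is equivalent to every gap being at most $R$, which in turn follows once I show that every $p \in \hoplam((-a,a))$ admits another point of the set in $(p, p+R]$. Since the window $(-a,a)$ is symmetric, $\hoplam((-a,a))$ is invariant under $p \mapsto -p$; together with $0 \in \hoplam((-a,a))$ this yields unboundedness on both sides, so the ``step to the right'' estimate is all that is needed. I will in fact obtain $R = \tfrac{\tau^2}{5a}$, which is $\le \tfrac{\tau^3}{5a}$ since $\tau > 1$.

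The mechanism is the self-similarity of $\cL^0$. From \eqref{eq:dual fib lattice} one sees that $\cL^0$ is a $\ZZ[\tau]$-module, invariant under multiplication by $\tau$, which acts on a point as $(p, p^\star) \mapsto (\tau p, \tau' p^\star)$ because $\star$ is Galois conjugation. Applying this to the basis vector $e_1 = \tfrac{1}{\sqrt{5}}(-\tau', \tau)$ and using $\tau-1 = \tau^{-1}$ and $\tau\tau' = -1$, the orbit $v_j := \tau^j e_1$ has physical coordinate $v_j = \tfrac{\tau^{j-1}}{\sqrt{5}}$ and internal coordinate $v_j^\star = (-1)^j \tfrac{\tau^{1-j}}{\sqrt{5}}$, so that $v_j\, v_j^\star = \tfrac{(-1)^j}{5}$ for every $j$. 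This product identity is the crux: it converts a lower bound on $|v_j^\star|$ directly into an upper bound on the physical coordinate $v_j$.

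Next I select two gap vectors. For even indices, $v_j^\star>0$ decreases by a factor $\tau^{2}$ as $j$ increases by $2$; the hypothesis $a < \tfrac{\tau}{\sqrt{5}} = v_0^\star$ guarantees a smallest even index $j \ge 2$ with $v_j^\star < a$, and minimality forces $v_j^\star \ge a\tau^{-2}$, whence the vector $\alpha := v_j$ satisfies $0 < \alpha^\star < a$ and $\alpha = \tfrac{1}{5\alpha^\star} \le \tfrac{\tau^2}{5a}$. The same argument over odd indices produces $\beta$ with $-a < \beta^\star < 0$ and $0 < \beta \le \tfrac{\tau^2}{5a}$. Finally I run a greedy (ping-pong) step: given $p \in \hoplam((-a,a))$, if $p^\star \le 0$ I pass to $p + \alpha$, and if $p^\star > 0$ I pass to $p+\beta$; in either case the sign conditions together with $\alpha^\star, |\beta^\star| < a$ keep the new internal coordinate strictly inside $(-a,a)$, so the new point again lies in $\hoplam((-a,a))$ and is at physical distance at most $\tfrac{\tau^2}{5a}$ to the right of $p$. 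This furnishes the required successor and completes the gap bound.

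The main obstacle is the simultaneous control in the construction of $\alpha$ and $\beta$: the internal coordinate must land inside the window while the physical coordinate stays small, and these two requirements pull in opposite directions. The product identity $v_j v_j^\star = \pm\tfrac15$ resolves exactly this tension, reducing the whole estimate to the elementary geometric-sequence bookkeeping above. The remaining care is routine — verifying the strict (open-interval) endpoint inequalities in the greedy step and checking that the extremal indices exist, for which the standing hypothesis $a < \tfrac{\tau}{\sqrt{5}}$ is used to pin down the relevant power of $\tau$.
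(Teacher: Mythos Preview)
Your proof is correct and takes a genuinely different route from the paper's. The paper first establishes the base case $\hoplam((-\tfrac{\tau}{\sqrt5},\tfrac{\tau}{\sqrt5})) + [0,\tfrac{\tau}{\sqrt5}] = \RR$ via a fundamental-domain argument for $\cL^0$, then invokes the global self-similarity $\tau\,\hoplam((-\alpha,\alpha)) = \hoplam((-\alpha/\tau,\alpha/\tau))$ to rescale that base case down to the given window. You instead work pointwise: using the orbit $v_j=\tau^j e_1$ and the identity $v_j\,v_j^\star=(-1)^j/5$, you manufacture two concrete step vectors $\alpha,\beta$ with $\alpha^\star\in(0,a)$, $\beta^\star\in(-a,0)$ and physical length at most $\tau^2/(5a)$, and then run a greedy walk that never exits the window. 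Both arguments ultimately rest on multiplication by $\tau$, but yours avoids the fundamental-domain computation, is fully constructive, and in fact yields the sharper constant $\tau^2/(5a)$ (improving the stated $\tau^3/(5a)$ by a factor of $\tau$); the paper's version, on the other hand, packages the self-similarity as a single scaling identity that may transport more readily to other cut-and-project schemes.
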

\begin{proof}
It is easy to see that
the rectangle $[0 , \frac{\tau}{\sqrt{5}}] \times [-\frac{1}{\sqrt{5}}, \frac{\tau}{\sqrt{5}}]$ contains a fundamental domain of $\cL$.
This immediately implies that
\[
\hoplam ((-\frac{\tau}{\sqrt{5}}, \frac{\tau}{\sqrt{5}}))  +  [0 , \frac{\tau}{\sqrt{5}}] = \RR \,.
\]
Next, since $\tau$ is a unit in $\ZZ[\tau]$, a trivial computation (compare \cite[Fact~3.5]{KST}) shows that for all $\alpha >0$ we have
\[
\tau \hoplam((-\alpha,\alpha))= \hoplam((-\frac{\alpha}{\tau}, \frac{\alpha}{\tau})) \,.
\]

Now, since $a < \frac{\tau}{\sqrt{5}}$, picking the largest $n \in \NN$ so that $a \tau^n < \frac{\tau}{\sqrt{5}}$ we get
\begin{align*}
\hoplam ((-a, a))  +  [0 , \frac{\tau^3}{5a} ] & \supseteq \hoplam((-\frac{\frac{\tau}{\sqrt{5}}}{\tau^{n+1}},\frac{\frac{\tau}{\sqrt{5}}}{\tau^{n+1}} ))+[0, \frac{\tau^{n+2}}{\sqrt{5}}] \\
&= \tau^{n+1} \left(\hoplam ((-\frac{\tau}{\sqrt{5}}, \frac{\tau}{\sqrt{5}}))+[0 , \frac{\tau}{\sqrt{5}}]  \right)=\RR \,.
\end{align*}
This completes the proof.
\end{proof}

\smallskip
We are now ready to give results about the pure point component of the diffraction spectrum for a subset of $\Fib$.
We show that the structure of the pure point component for subsets of Fibonacci is somewhat similar in nature to the structure of the pure point
component for subsets of lattices \cite{Ba}. We will expand on this similarity in Section~\ref{sect:diss}.

\smallskip
Before looking at the diffraction, let us briefly recall that for a subset $\Lambda$ of the Fibonacci model set, any cluster point $\gamma_\Lambda$ of
\[
\frac{1}{\vol (B_{R}(0))} \delta_{\Lambda \cap B_{R}(0)}*\widetilde{\delta_{\Lambda \cap B_{R}(0)}}
\]
is called an \textbf{autocorrelation} measure of $\Lambda$. Such cluster points always exist (see for example \cite[Prop.~9.1]{TAO}).

Given any autocorrelation $\gamma_{\Lambda}$, there exists a unique positive measure $\widehat{\gamma_{\Lambda}}$ on $\RR$ such that, for all $\varphi \in \Cc(\RR)$ we have \cite[Thm.~4.5]{BF}
\[
\int_{\RR} \varphi*\tilde{\varphi}(t) \dd \gamma_{\Lambda}(t) = \int_{\RR} \left| \check{\varphi}(s) \right|^2 \dd \widehat{\gamma_{\Lambda}}(s) \,.
\]
The measure $\widehat{\gamma_{\Lambda}}$ is called a \textbf{diffraction measure of $\Lambda$}, and can also be seen as the distributional Fourier transform of the tempered measure $\gamma_{\Lambda}$.

\smallskip

We can now prove the following result.

\begin{proposition} \label{prop intensity bragg peaks fib} Let $\Lambda$ be an arbitrary subset of the Fibonacci model set, and let $a>0$ be a real number. Assume that
\begin{equation}\label{eq:I>0}
I := \widehat{ \gamma_{\Lambda}}(\{ 0\}) >0 \,.
\end{equation}
Then, for all $k \in \hoplam([-a,a])$ we have
\begin{displaymath}
\widehat{\gamma_\Lambda}(\{k\}) \geq \left(1- 2 \pi a \tau \right) I \,.
\end{displaymath}
\end{proposition}
\begin{proof}
Note that by construction we have
\begin{displaymath}
\supp(\gamma_\Lambda) \subseteq \oplam([-\tau, \tau] )=: \Delta  \,.
\end{displaymath}

Next, by \cite[Theorem~4.10.14]{MoSt} we have
\begin{align*}
I&=\widehat{\gamma_\Lambda}(\{ 0\})= \lim_n \frac{1}{2n} \left( \sum_{x \in \Delta \cap [-n,n]} \gamma_{\Lambda}(\{x\}) \right) \\
\widehat{\gamma_\Lambda}(\{ k\})&= \lim_n \frac{1}{2n} \left( \sum_{x \in \Delta \cap [-n,n]} e^{-2 \pi i x \cdot k}  \gamma_{\Lambda}(\{x\})\right) \,.
\end{align*}
Therefore,  (compare  \cite[Proposition~5.9.8]{NS11} and \cite[Theorem~3.1]{NS2}) we have
\begin{align*}
\widehat{\gamma_\Lambda}(\{ 0\})-\widehat{\gamma_\Lambda}(\{ k\})&= \left|\widehat{\gamma_\Lambda}(\{ 0\})-\widehat{\gamma_\Lambda}(\{ k\}) \right|\\
&\leqslant \limsup_n \frac{1}{2n}\left( \sum_{x \in \Delta \cap [-n,n]} \left| 1- e^{-2 \pi i x \cdot k} \right| \gamma_{\Lambda}(\{x\}) \right) \\
& \leqslant (2 \pi a \tau) \limsup_n \frac{1}{2n}\left( \sum_{x \in \Delta \cap [-n,n]}  \gamma_{\Lambda}(\{x\}) \right)= (2 \pi a \tau)I \,,
\end{align*}
with the first inequality on the last line following from Prop.~\ref{prop epsilon dual}.
\end{proof}

Before looking at some consequences, let us briefly discuss the condition $I>0$ and its potential connection to positive density.

\begin{remark} Let $\Lambda \subseteq \Fib$ and let $\gamma_{\Lambda}$ be an autocorrelation of $\Lambda$. Let $R_n \to \infty$ be an increasing sequence, such that $\gamma_{\Lambda}$ is a limit along $R_n$.

Then, the limit
\[
\gamma_{\Lambda}(\{0 \}) = \lim_n \frac{1}{\vol (B_{R_n}(0))}\card ( \Lambda \cap B_{R_n}(0))=: \dens_{R_n}(\Lambda)
\]
exists. Moreover, \eqref{eq:I>0} implies that $\dens_{R_n}(\Lambda) >0$.

If $\Lambda$ satisfies the consistent phase property (see \cite{LSS} for details), then \eqref{eq:I>0} is equivalent to $\dens_{R_n}(\Lambda) >0$, but in general this does not seem to be the case.

Finally, if $\Lambda$ is relatively dense, then a simple computation shows that
\[
\gamma_{\Lambda}(\{0 \}) \geq \left( \liminf_{n} \inf_{t \in \RR} \frac{1}{\vol (B_{R_n}(t))}\card ( \Lambda \cap B_{R_n}(t)) \right)^2 >0 \,.
\]
In particular, \eqref{eq:I>0} trivially holds for Meyer subsets of $\Fib$.
\end{remark}

\smallskip

Prop.~\ref{prop intensity bragg peaks fib} has the following immediate consequence.

\begin{theorem}\label{them:1} Let $\Lambda \subseteq \Fib$ be any subset satisfying \eqref{eq:I>0}. Then, $\Lambda$ has a Bragg peak at every $k \in \hoplam((-\frac{1}{2 \pi \tau},\frac{1}{2 \pi \tau} ))$, of  intensity
  \begin{displaymath}
  \widehat{\gamma_\Lambda}(\{ k\}) \geq \left(1- 2 \pi |k^\star| \tau \right) I >0 \,.
  \end{displaymath}
\end{theorem}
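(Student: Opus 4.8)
The plan is to obtain Theorem~\ref{them:1} as an immediate specialization of Proposition~\ref{prop intensity bragg peaks fib}, the only real choice being how to pick the parameter $a$ for a given $k$. The key observation is that membership in $\hoplam$ is governed entirely by the internal coordinate: by the definition of $\hoplam(J)$ one has $k \in \hoplam(J)$ if and only if $k^\star \in J$. In particular the hypothesis $k \in \hoplam((-\frac{1}{2\pi\tau}, \frac{1}{2\pi\tau}))$ is equivalent to $|k^\star| < \frac{1}{2\pi\tau}$, hence to $2\pi|k^\star|\tau < 1$.

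First I would separate off the degenerate case $k = 0$. Since the star map is a bijection fixing $0$, we have $k^\star = 0$ exactly when $k=0$, and in that case the asserted inequality is just $\widehat{\gamma_\Lambda}(\{0\}) = I \ge (1-0)I$, which holds by the definition of $I$ (and $I>0$ by assumption).

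For $k \neq 0$ we have $k^\star \neq 0$, so I would set $a := |k^\star| > 0$; this positivity is exactly what is needed to invoke Proposition~\ref{prop intensity bragg peaks fib}. Since $k^\star \in \{-a, a\} \subseteq [-a,a]$, we have $k \in \hoplam([-a,a])$, and the proposition yields
\begin{displaymath}
\widehat{\gamma_\Lambda}(\{k\}) \ge (1 - 2\pi a\tau)\,I = \bigl(1 - 2\pi|k^\star|\tau\bigr) I \,.
\end{displaymath}
It remains to note that the strict window bound $|k^\star| < \frac{1}{2\pi\tau}$ gives $1 - 2\pi|k^\star|\tau > 0$, whence the right-hand side is strictly positive because $I>0$; thus $k$ is indeed a Bragg peak.

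I do not expect any substantive obstacle here: the content is entirely carried by Proposition~\ref{prop intensity bragg peaks fib}, and this theorem merely records its sharp form after the optimal choice $a = |k^\star|$. The two points meriting a line of care are that the proposition requires $a>0$ (forcing the trivial but separate treatment of $k=0$) and that it is the \emph{strictness} of the inequality defining the open window that converts the nonnegative intensity bound into a strictly positive one.
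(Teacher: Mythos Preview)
Your proposal is correct and follows essentially the same approach as the paper: set $a := |k^\star|$ and apply Proposition~\ref{prop intensity bragg peaks fib}. Your treatment is in fact slightly more careful, as you explicitly separate off the degenerate case $k=0$ (where $a=0$ falls outside the hypothesis $a>0$ of the proposition), whereas the paper's proof tacitly ignores this trivial case.
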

\begin{proof} Let $k \in \ZZ[\tau]$. Set $a:=|k^\star|$. Then $k \in \hoplam([-a,a])$ and hence by Proposition~\ref{prop intensity bragg peaks fib} we have
\begin{displaymath}
\widehat{\gamma_\Lambda}(\{ k\}) \geq\left(1- 2 \pi a \tau \right) I= \left(1- 2 \pi |k^\star| \tau \right) I >0  \,.
\end{displaymath}
\end{proof}

\begin{corollary}[High intensity Bragg peaks]\label{high int} Let $0 < \eps <1$, and let
$$P_\eps:= \hoplam([-\frac{\eps}{2 \pi \tau},\frac{\eps}{2 \pi \tau} ]) \,.$$
Then, every subset $\Lambda \subseteq \Fib$ satisfying \eqref{eq:I>0} has a Bragg peak at every $k \in P_\eps$, of intensity at least
$$
\widehat{\gamma_\Lambda}(\{k\}) \geq (1-\eps) I \,.
$$
\end{corollary}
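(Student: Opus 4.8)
The plan is to specialize Proposition~\ref{prop intensity bragg peaks fib} to the single choice $a := \tfrac{\eps}{2\pi\tau}$. First I would note that this value is strictly positive (as $\eps>0$), so the hypotheses of the proposition are met, and that with this $a$ the set in the corollary is exactly $B_\eps = \hoplam([-a,a])$ by its very definition. Thus every point of interest is of the form $k \in \hoplam([-a,a])$, which is precisely the range over which Proposition~\ref{prop intensity bragg peaks fib} supplies a lower bound.

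Next I would fix an arbitrary $k \in B_\eps = \hoplam([-a,a])$ and invoke Proposition~\ref{prop intensity bragg peaks fib}, which gives
\[
\widehat{\gamma_\Lambda}(\{k\}) \;\geq\; \bigl(1 - 2\pi a\tau\bigr)\, I \,.
\]
The whole purpose of this particular $a$ is that the coefficient collapses, since $2\pi a\tau = 2\pi \cdot \tfrac{\eps}{2\pi\tau} \cdot \tau = \eps$, so the right-hand side becomes $(1-\eps)I$, which is exactly the asserted bound.

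There is essentially no obstacle here: all of the analytic content, namely the intensity limit formulas of Theorem~\ref{thm:intens} and the character estimate of Proposition~\ref{prop epsilon dual}, has already been absorbed into Proposition~\ref{prop intensity bragg peaks fib}, so the corollary is a pure substitution. The only two points worth a remark are that $\eps \in (0,1)$ together with $I>0$ forces $(1-\eps)I > 0$, so each $k \in B_\eps$ is a genuine positive-intensity Bragg peak rather than a vacuous statement; and that one should keep the interval defining $B_\eps$ closed, so that it matches the closed interval $[-a,a]$ for which Proposition~\ref{prop intensity bragg peaks fib} is formulated.
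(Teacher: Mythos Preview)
Your proof is correct and follows essentially the same approach as the paper: both reduce immediately to Proposition~\ref{prop intensity bragg peaks fib}. The only cosmetic difference is that the paper passes through the intermediate bound $\widehat{\gamma_\Lambda}(\{k\}) \geq (1 - 2\pi|k^\star|\tau)I$ (as in Theorem~\ref{them:1}) before using $|k^\star| \leq \tfrac{\eps}{2\pi\tau}$, whereas you substitute $a = \tfrac{\eps}{2\pi\tau}$ directly; the content is identical.
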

\begin{proof}
By Proposition~\ref{prop intensity bragg peaks fib} we have
\begin{displaymath}
\widehat{\gamma_\Lambda}(\{ k\}) \geq \left(1- 2 \pi |k^\star| \tau \right) I \geq (1-\eps) I \,.
\end{displaymath}
\end{proof}

\begin{figure}[ht]\label{fib1}
\includegraphics[width=7cm]{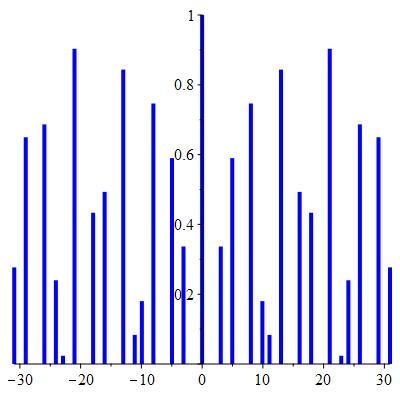}
\centering
\caption{All subsets of $\Fib$ satisfying \eqref{eq:I>0}, have Bragg peaks at the depicted positions. The intensities are at least the height of the vertical line, drawn as a percentage of the intensity of Bragg peak at the origin. Picture created in Maple by Anna Klick.}
\end{figure}

\subsection{Continuous spectrum of subsets of Fibonacci}

We next look at the continuous spectrum component of subsets of $\Fib$. Since the details are long and technical, yet simple, we skip most steps and refer the reader
to the extended arXiv version of this paper \cite{NS21}.

\medskip
First, define
\begin{displaymath}
h(x)=\frac{27}{8} 1_{[-\tau-1, \tau+1]}* 1_{[-\frac{1}{3},\frac{1}{3}]}*  1_{[-\frac{1}{3},\frac{1}{3}]}*  1_{[-\frac{1}{3},\frac{1}{3}]}
\end{displaymath}
and define
\begin{equation}\label{eq:pp-fib}
\omega:= \sum_{x \in \ZZ[\tau]} h(x^\star) \delta_x \,.
\end{equation}
Then, it is easy to check that $\omega$ is a ping-pong measure for the Fibonacci model set (see Def.~\ref{def:ping-pong} for definition and Prop.~\ref{existence ping pong} (b) below for proofs). Then, Prop.~\ref{existence ping pong} and Thm.~\ref{ping pong theorem} below give:

\begin{theorem}\label{theorem:fib ping pong} Let $\omega$ be as in \eqref{eq:pp-fib}. Let
 $\varphi \in \Cc(\RR)$ be such that $\supp(\varphi) \subseteq (-\frac{1}{4}, \frac{1}{4})$ and $\| \varphi \|_2=1$. Then,
\begin{itemize}
  \item[(a)] $\omega$ is Fourier transformable, and
  \begin{displaymath}
\widehat{\omega}=\frac{3+\sqrt{5}}{\sqrt{5}} \sum_{(y,y^\star) \in \cL^0} \sinc (2\pi (\tau+1) y^\star) \sinc^3 (\frac{2\pi}{3} y^\star) \delta_y \,.
  \end{displaymath}
  \item[(b)] For each $\Lambda \subseteq \Fib$, the measure $\nu:= \left|\widecheck{\varphi}\right|^2 \widehat{\gamma_\Lambda}$ is finite and
  \begin{displaymath}
\widehat{\gamma_\Lambda}= \widehat{\omega} * \nu \,.
  \end{displaymath}\qed
  \end{itemize}
\end{theorem}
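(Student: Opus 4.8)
The plan is to treat the two parts separately, using the fact that the Fibonacci CPS $(\RR,\RR,\cL)$ is a model with a lattice $\cL$ whose dual $\cL^0$ is given explicitly in \eqref{eq:dual fib lattice}. For part (a), the weight function $h$ is by construction a threefold convolution of box functions against a box, so it is continuous, compactly supported, and its inverse Fourier transform $\widecheck{h}$ factors as a product of sinc factors (each box $1_{[-c,c]}$ has $\widecheck{1_{[-c,c]}}(t)=2c\,\sinc(2\pi c t)$, and convolution becomes multiplication under $\widecheck{\phantom{x}}$). I would first compute $\widecheck{h}$ explicitly and check it decays like $|t|^{-4}$, which guarantees the summability needed below. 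The measure $\omega=\sum_{(x,x^\star)\in\cL}h(x^\star)\delta_x$ is then a weighted Dirac comb supported on the lattice projection, and the key input is the twisted Poisson Summation Formula for $\cL$ from \cite{CRS}, which gives
\begin{displaymath}
\widehat{\omega}=\frac{1}{\mathrm{dens}(\cL)}\sum_{(y,y^\star)\in\cL^0}\widecheck{h}(y^\star)\,\delta_y \,.
\end{displaymath}
Here I must verify that the covolume of $\cL$ is $|\det A|=\sqrt5$, so the density prefactor is $\tfrac{1}{\sqrt5}$; substituting the explicit $\widecheck{h}(y^\star)=\frac{27}{8}\cdot 2(\tau+1)\cdot\bigl(\tfrac{2}{3}\bigr)^3\,\sinc(2\pi(\tau+1)y^\star)\,\sinc^3(\tfrac{2\pi}{3}y^\star)$ and simplifying the scalar to $\tfrac{3+\sqrt5}{\sqrt5}$ (using $\tau+1=\tau^2$ and $\frac{27}{8}\cdot 2\cdot\frac{8}{27}=2$, $2(\tau+1)=3+\sqrt5$) yields the stated formula. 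The main care here is purely bookkeeping: confirming the Fourier-transformability hypotheses of the PSF in \cite{CRS} are met, which hold precisely because $\widecheck{h}$ is $O(|t|^{-4})$ and hence $\cL^0$-summable.

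For part (b), the strategy is the convolution-factorization that is the heart of the ping-pong method. The point of choosing $h\equiv 1$ on the window difference $W-W=[-\tau,\tau]-[-\tau,\tau]=[-2\tau,2\tau]$ is that, since $\supp(\gamma_\Lambda)\subseteq\oplam([-\tau,\tau])$, the autocorrelation $\gamma_\Lambda$ is supported on differences whose star-coordinates lie in $W-W$, where $h=1$; this will let $\widehat\omega$ act as an approximate identity under convolution against the diffraction. Concretely I would set $\nu:=|\widecheck\varphi|^2\,\widehat{\gamma_\Lambda}$ and show first that $\nu$ is a finite measure: since $\varphi\in\Cc(\RR)$ with the stated support and integral one, $\widecheck\varphi$ is a bounded continuous function with $|\widecheck\varphi(0)|=1$ and fast decay, and $\widehat{\gamma_\Lambda}$ is translation-bounded, so $|\widecheck\varphi|^2\widehat{\gamma_\Lambda}$ is integrable. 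The identity $\widehat{\gamma_\Lambda}=\widehat\omega*\nu$ is then proved by testing both sides against $g\in\Cc(\RR)$ and unfolding the convolution on the $\cL^0$-comb side of $\widehat\omega$, using that for each Bragg-type contribution the $\sinc$-weights reproduce the correct values because $h=1$ on the relevant window. The crucial algebraic fact is that the support condition forces $\sinc(2\pi(\tau+1)y^\star)\sinc^3(\tfrac{2\pi}{3}y^\star)$ to coincide with the value $\widecheck h$ would take, so summing $\widehat\omega$ against $\nu$ telescopes back to $\widehat{\gamma_\Lambda}$.

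The hard part will be part (b): making rigorous the claim that convolving the pure-point comb $\widehat\omega$ with the finite measure $\nu$ reconstructs $\widehat{\gamma_\Lambda}$ exactly rather than merely approximately. This requires justifying the interchange of summation over $\cL^0$ with the integration defining the convolution (legitimate by absolute summability from the $O(|t|^{-4})$ decay established in part (a)), and, more delicately, verifying that the auxiliary function $\varphi$ with support in $(-\tfrac14,\tfrac14)$ localizes the star-coordinate differences tightly enough that only those pairs $(x,x^\star),(x',x'^\star)\in\cL$ with $x^\star-x'^\star\in W-W$ contribute — precisely the regime where $h=1$. I expect this to reduce, after the dust settles, to a clean consistency check between the support of $\gamma_\Lambda$, the choice of $h$, and the localization width of $\varphi$; the technical lemmas making this interchange and support-matching precise are the ones the authors have deferred to Appendix~\ref{app:LCA}, and I would follow that same route, citing the translation-boundedness of $\widehat{\gamma_\Lambda}$ and the Fourier-transformability of $\omega$ from part (a) as the two pillars of the argument.
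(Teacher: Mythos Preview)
Your approach to part (a) is essentially the paper's: apply the Poisson Summation Formula of \cite{CRS} to $\omega=\omega_h$, compute $\widecheck{h}$ as a product of sinc factors, and simplify the constant. (One slip: the prefactor in the PSF is $\dens(\cL)=1/\sqrt{5}$, not $1/\dens(\cL)$; you reach the right value $1/\sqrt5$ but via the wrong formula.)

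Part (b), however, contains a genuine error and a confused mechanism. First, the Fibonacci window is $W=[-1,\tau-1)$, so $W-W=(-\tau,\tau)$, \emph{not} $[-\tau,\tau]-[-\tau,\tau]=[-2\tau,2\tau]$. This matters: by direct computation $h\equiv 1$ only on $[-\tau,\tau]$, and $h$ is strictly less than $1$ on $[-2\tau,2\tau]\setminus[-\tau,\tau]$; had the support of $\gamma_\Lambda$ really required $h\equiv 1$ on $[-2\tau,2\tau]$, your argument would fail. With the correct $W-W\subseteq[-\tau,\tau]$ the needed condition does hold.

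Second, your description of how the identity $\widehat{\gamma_\Lambda}=\widehat\omega*\nu$ is established is not a proof sketch but a restatement of the conclusion. The sentence ``the support condition forces $\sinc(2\pi(\tau+1)y^\star)\sinc^3(\tfrac{2\pi}{3}y^\star)$ to coincide with the value $\widecheck h$ would take'' is vacuous: that expression \emph{is} $\widecheck h(y^\star)$ up to the constant from part (a), independent of any support condition. The actual argument happens entirely on the \emph{real} side, not by unfolding sinc sums on the Fourier side. The paper's proof of (b) is much shorter than you anticipate: one verifies by a three-line iterated-convolution computation that $h\equiv 1$ on $[-\tau,\tau]$, so that $\omega$ is a ping-pong measure for the weak model set $\oplam([-\tau,\tau])\supseteq\supp(\gamma_\Lambda)$, and then invokes the general ping-pong Lemma (Theorem~\ref{ping pong theorem}, equivalently \cite[Lemma~3.3]{NS20a}) as a black box. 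The convolution identity and the finiteness of $\nu$ are both consequences of that lemma; no direct ``testing against $g\in\Cc(\RR)$'' or interchange of sums over $\cL^0$ is needed here.
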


As an immediate consequence we get:

\begin{corollary} Under the assumptions of Theorem~\ref{theorem:fib ping pong}(b), we have
\[
  \left(\widehat{\gamma_\Lambda}\right)_{\mathsf{pp}}  =  \widehat{\omega} * \nu_{\mathsf{pp}} \,;\,
  \left(\widehat{\gamma_\Lambda}\right)_{\mathsf{ac}}  =  \widehat{\omega} * \nu_{\mathsf{ac}} \,;\,
 \left(\widehat{\gamma_\Lambda}\right)_{\mathsf{sc}}  =  \widehat{\omega} * \nu_{\mathsf{sc}} \,.
\]\qed
\end{corollary}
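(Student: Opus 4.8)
The plan is to feed the two parts of Theorem~\ref{theorem:fib ping pong} into the uniqueness of the Lebesgue decomposition. By part~(a) the measure $\widehat{\omega}$ is pure point, and its coefficients $\sinc (2\pi (\tau+1) y^\star)\,\sinc^3 (\tfrac{2\pi}{3} y^\star)$ are summable over any bounded region of $y$-values (this is exactly the $\tfrac{1}{(y^\star)^4}$ decay built into the choice of $h$, together with the fact that the $y^\star$ occurring over a bounded strip tend to $\pm\infty$), so $\widehat{\omega}$ is a \emph{translation bounded} pure point measure. Writing $\widehat{\omega} = \sum_{y} c_y \delta_y$ over the countable set $y \in \pi_1(\cL^0)$, its convolution with the finite measure $\nu$ of part~(b) is well defined and translation bounded, and convolution distributes over the Lebesgue decomposition $\nu = \nu_{\mathsf{pp}} + \nu_{\mathsf{ac}} + \nu_{\mathsf{sc}}$. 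Together with part~(b) this gives
\[
\widehat{\gamma_\Lambda} \;=\; \widehat{\omega} * \nu \;=\; \widehat{\omega}*\nu_{\mathsf{pp}} \,+\, \widehat{\omega}*\nu_{\mathsf{ac}} \,+\, \widehat{\omega}*\nu_{\mathsf{sc}} \,.
\]

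The core of the argument is to identify the spectral type of each of the three summands. Here I would use that convolution with a single Dirac $\delta_y$ is the translation $T_y$, so that $\widehat{\omega}*\mu = \sum_y c_y\, T_y\mu$ for any of the three pieces $\mu$, and that $T_y$ preserves each spectral type while the relevant defining property survives the countable sum. Explicitly: $\widehat{\omega}*\nu_{\mathsf{pp}}$ is carried by the countable set $\bigcup_y \bigl(\supp(\nu_{\mathsf{pp}})+y\bigr)$, hence is pure point; if $\nu_{\mathsf{ac}} = g\,\dd x$ then $\widehat{\omega}*\nu_{\mathsf{ac}}$ has Radon--Nikodym density $\sum_y c_y\, g(\cdot-y)$ with respect to Lebesgue measure, which is locally integrable by translation boundedness, so this measure is absolutely continuous; and if $\nu_{\mathsf{sc}}$ is concentrated on a Lebesgue-null Borel set $N$ and has no atoms, then $\widehat{\omega}*\nu_{\mathsf{sc}}$ is concentrated on the Lebesgue-null set $\bigcup_y (N+y)$ and satisfies $(\widehat{\omega}*\nu_{\mathsf{sc}})(\{p\}) = \sum_y c_y\, \nu_{\mathsf{sc}}(\{p-y\}) = 0$ for every point $p$, hence is singular continuous.

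With the three spectral types established, the claim is immediate from the uniqueness of the Lebesgue decomposition: the displayed identity exhibits $\widehat{\gamma_\Lambda}$ as a sum of a pure point, an absolutely continuous and a singular continuous measure, and these must therefore coincide with $(\widehat{\gamma_\Lambda})_{\mathsf{pp}}$, $(\widehat{\gamma_\Lambda})_{\mathsf{ac}}$ and $(\widehat{\gamma_\Lambda})_{\mathsf{sc}}$ respectively. The step I expect to require the most care is the passage from finite partial sums to the full series $\sum_y c_y T_y\mu$: one must check that it converges to a translation bounded measure and, crucially, that the defining property of each type — being carried by a countable set, having an $L^1_{\mathrm{loc}}$ density, or being concentrated on a null set with no atoms — is genuinely inherited by the limit. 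This is exactly where the translation boundedness of $\widehat{\omega}$ together with the finiteness of $\nu$ does the work: testing against $f \in \Cc(\RR)$, the estimate $\sum_y |c_y|\!\int |f(s+y)|\,\dd\nu(s) = \int |\widehat{\omega}|(\supp(f)-s)\,\dd\nu(s) \le C\,\nu(\RR)$ shows absolute convergence, and it rules out any accumulation of the (dense) family of translates that could manufacture an atom or an absolutely continuous contribution out of a singular one.
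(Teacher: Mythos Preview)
Your argument is correct and is exactly the reasoning the paper has in mind: the paper gives no proof for this corollary at all, and later in Section~\ref{sect ping} simply asserts the general version with the words ``It follows immediately from here that $(\widehat{\gamma})_{\mathsf{pp}}=\widehat{\omega}*\nu_{\mathsf{pp}}$\,'' etc. What you have written is precisely the unpacking of that ``immediately'': since $\widehat{\omega}$ is translation bounded pure point, convolution with it preserves each spectral type, and uniqueness of the Lebesgue decomposition finishes.
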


Now, we proceed to get quantitative results about the norm almost periods of $\widehat{\omega}$. First, it is easy to see (\cite[Lemma~A.3]{NS21}) that, in the Fibonacci dual CPS $(\RR, \RR, \cL^0)$, any translate of $[-\frac{1}{2}, \frac{3}{2}] \times [0, \frac{1}{20}]$ meets $\cL^0$ at most once. This, combined with the fact that $g(x)=\sinc(ax)\sinc^3(bx)$ is a Lipshitz function, with Lipshitz constant $\frac{a+3b}{2}$  (\cite[Lemma~A.4]{NS21}),
and a standard computation (\cite[Theorem 2.14]{NS21}) yields:

\begin{theorem}\label{fib2}  Let $\omega$ be as in \eqref{eq:pp-fib} and let $0 < \alpha <\frac{1}{81}$ . Then, for all $t \in \hoplam([-\alpha, \alpha])$, we have
\[
\|T_t \widehat{\omega}-  \widehat{\omega} \|_{[-1,1]} < 2507 (\alpha)^{\frac{3}{4}} \,.
\] \qed
\end{theorem}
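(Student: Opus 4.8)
The plan is to exploit the lattice structure of $\cL^0$ in order to convert the translation $T_t$ in physical space into a shift by $t^\star$ in internal space, and then to estimate the resulting weight differences by playing the decay of $g$ against its Lipschitz behaviour.

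First I would use Theorem~\ref{theorem:fib ping pong}(a) to write $\widehat{\omega}=\frac{3+\sqrt5}{\sqrt5}\sum_{(y,y^\star)\in\cL^0}g(y^\star)\delta_y$, where $g(s):=\sinc(2\pi(\tau+1)s)\sinc^3(\tfrac{2\pi}{3}s)$. Since $t\in\hoplam([-\alpha,\alpha])\subseteq\pi_1(\cL^0)$, we have $(t,t^\star)\in\cL^0$ with $|t^\star|\le\alpha$, and translation by $t$ permutes $\pi_1(\cL^0)$ with $(y+t)^\star=y^\star+t^\star$ (injectivity of $\pi_1$ on $\cL^0$). Reindexing $z=y+t$ gives
\[
T_t\widehat{\omega}-\widehat{\omega}=\frac{3+\sqrt5}{\sqrt5}\sum_{(z,z^\star)\in\cL^0}\bigl(g(z^\star-t^\star)-g(z^\star)\bigr)\delta_z\,.
\]
As this is a pure point measure on $\pi_1(\cL^0)$, its norm over the length-$2$ window is
\[
\|T_t\widehat{\omega}-\widehat{\omega}\|_{[-1/2,3/2]}=\frac{3+\sqrt5}{\sqrt5}\sup_{x\in\RR}\sum_{\substack{(z,z^\star)\in\cL^0\\ z\in[x-1/2,\,x+3/2]}}\bigl|g(z^\star-t^\star)-g(z^\star)\bigr|\,.
\]

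Next I would assemble three explicit estimates. (i) A Lipschitz bound: since $g$ equals $\tfrac{1}{2(\tau+1)}$ times the Fourier transform of the nonnegative compactly supported $h$, the inequality $|\mathrm{e}^{2\pi i u}-1|\le 2\pi|u|$ yields $|g(s)-g(s')|\le\frac{\pi}{\tau+1}\bigl(\int|x|h(x)\,dx\bigr)|s-s'|\le 2\pi(\tau+2)|s-s'|$, using $\int h=2(\tau+1)$ and $\supp h\subseteq[-\tau-2,\tau+2]$. (ii) A decay bound: from $|\sinc(u)|\le\min(1,1/|u|)$ one gets $|g(s)|\le\frac{27}{16\pi^4(\tau+1)}\,s^{-4}$ for $|s|\ge 1$. (iii) A uniform count: covering each point of $\cL^0$ by a translate of a fundamental domain contained in $[0,\tfrac{\tau}{\sqrt5}]\times[-\tfrac1{\sqrt5},\tfrac{\tau}{\sqrt5}]$ shows that any strip $[x-1/2,x+3/2]\times J$ with $|J|=L$ contains at most $(2\sqrt5+\tau)\bigl(L+\tfrac{\tau^2}{\sqrt5}\bigr)$ points of $\cL^0$, uniformly in $x$.

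With these in hand I would split the sum at a cutoff $M>0$. For the finitely many points with $|z^\star|\le M$ I apply (i) together with (iii) for $L=2M$, giving a contribution of order $\alpha M$. For the infinitely many points with $|z^\star|>M$ I use the triangle inequality and (ii), grouping by unit intervals in $z^\star$ and comparing the resulting sum with $\int_M^\infty s^{-4}\,ds$, giving a contribution of order $M^{-3}$. Choosing $M=\alpha^{-1/4}$ balances the two and produces the exponent $\tfrac34$; the hypothesis $\alpha<\tfrac1{81}$ forces $M>3$, which both validates the decay bound and lets me absorb the lower-order terms (the $\tfrac{\tau^2}{\sqrt5}$ correction and the $M^{-4}$ tail) into the leading $\alpha^{3/4}$ term. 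The routine but delicate part is then multiplying out $\frac{3+\sqrt5}{\sqrt5}$, the Lipschitz constant $2\pi(\tau+2)$, the counting factor $2\sqrt5+\tau$, and the decay constant $\frac{27}{16\pi^4(\tau+1)}$, bounding the leftovers via $\alpha<\tfrac1{81}$, to land under the stated $2507$. I expect the genuine obstacle to be the uniform counting estimate (iii): making it independent of the window position $x$ and correctly handling that the full strip $[x-1/2,x+3/2]\times\RR$ carries infinitely many lattice points, so that convergence of the far sum is essential rather than incidental.
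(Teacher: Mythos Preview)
Your proposal is correct and follows essentially the same route as the paper: rewrite $T_t\widehat{\omega}-\widehat{\omega}$ as a weighted comb on $\cL^0$ with coefficients $g(z^\star-t^\star)-g(z^\star)$, split at a cutoff $M=\alpha^{-1/4}$, apply a Lipschitz bound on $g$ near the origin and the $s^{-4}$ decay of $g$ far away, and count lattice points in a strip of width $2$. The only differences are in implementation: the paper obtains the Lipschitz constant $\pi(\tau+2)$ via the explicit derivative bound $|\sinc'|\le\tfrac12$ (Lemma~\ref{sinc is lipshitz}) rather than your Fourier argument (which gives $2\pi(\tau+2)$), and for the counting the paper proves the sharp statement that any translate of $[-\tfrac12,\tfrac32]\times[0,\tfrac1{10}]$ contains at most one point of $\cL^0$ (Lemma~\ref{unif disc}) instead of your fundamental-domain area estimate---both choices lead to constants comfortably under $2507$.
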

Let us note here that, the statement in \cite[Theorem 2.14]{NS21} uses the norm $\| \, \|_{[-\frac{1}{2}, \frac{3}{2}]}$. Anyhow, it follows trivially from the definition that, a compact set and any of its translates define equal norms, which allow us use $\| \, \|_{[-1,1]}$ instead.

\smallskip

We can now use Theorem~\ref{theorem:fib ping pong} to relate the almost periods of the diffraction of any subset of $\Fib$ to the almost periods of $\widehat{\omega}$. Indeed, a simple computation
yields (\cite[Theorem 2.15]{NS21}).

\begin{theorem}\label{fib 3}
Let $\omega$ be as in \eqref{eq:pp-fib} and let $\Lambda \subseteq \Fib$ be any set. Let $\gamma$ be an autocorrelation of $\Lambda$, calculated along some $\cA=\{ B_{R_n}(0) \}$.

Then, for all $t \in \RR$ we have
\[
\| T_t  \left(\widehat{\gamma}\right) -  \left(\widehat{\gamma}\right) \|_{[0,1]} \leq {\rm d} \| T_t \widehat{\omega}-\widehat{\omega} \|_{[-1, 1]} \,,
\]
where
$$
{\rm d}=\gamma(\{ 0\})= \dens_{\cA}(\Lambda) \,.
$$
In particular
\[
\| T_t  \left(\widehat{\gamma}\right) -  \left(\widehat{\gamma}\right) \|_{[0,1]} \leq  \frac{\tau}{\sqrt{5}}\| T_t \widehat{\omega}-\widehat{\omega} \|_{[-1, 1]} \,.
\]
\end{theorem}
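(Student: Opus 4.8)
The plan is to transfer the almost-period estimate from $\widehat\omega$ to $\widehat\gamma$ through the convolution identity in Theorem~\ref{theorem:fib ping pong}(b), which furnishes a finite positive measure $\nu$ with $\widehat\gamma=\widehat\omega*\nu$. The first and decisive step is purely formal: since translation commutes with convolution and convolution is bilinear, for every $t\in\widehat G$
\[
T_t\widehat\gamma-\widehat\gamma=T_t(\widehat\omega*\nu)-\widehat\omega*\nu=(T_t\widehat\omega-\widehat\omega)*\nu .
\]
This turns the problem into a seminorm estimate for a convolution of the translation-bounded measure $\mu:=T_t\widehat\omega-\widehat\omega$ with the finite measure $\nu$.

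Next I would invoke the elementary inequality $\|\mu*\nu\|_K\le\|\nu\|\,\|\mu\|_K$, valid for every compact $K$: from $|\mu*\nu|\le|\mu|*|\nu|$ one gets $|\mu*\nu|(x+K)\le\int|\mu|\bigl((x-u)+K\bigr)\,d|\nu|(u)$, and bounding each term $|\mu|((x-u)+K)\le\|\mu\|_K$ (the seminorm being unchanged when $K$ is translated) leaves the factor $\|\nu\|$. Taking $K=[0,1]$ yields
\[
\|T_t\widehat\gamma-\widehat\gamma\|_{[0,1]}\le\|\nu\|\,\|T_t\widehat\omega-\widehat\omega\|_{[0,1]} ,
\]
and since $\|\cdot\|_K$ is monotone in $K$ and $[-\frac{1}{2},\frac{3}{2}]\supseteq[0,1]$ is precisely the window in which Theorem~\ref{fib2} controls $\widehat\omega$, the right-hand side is at most $\|\nu\|\,\|T_t\widehat\omega-\widehat\omega\|_{[-\frac{1}{2},\frac{3}{2}]}$.

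It then remains to show that the constant is exactly $\mathrm d=\gamma(\{0\})$, i.e. that $\|\nu\|=\gamma(\{0\})$. I would obtain this by taking the van Hove mean $M$ of both sides of $\widehat\gamma=\widehat\omega*\nu$ and using that convolving with a finite measure multiplies the mean by its total mass, so that $M(\widehat\gamma)=M(\widehat\omega)\,\|\nu\|$. Here $M(\widehat\gamma)=\gamma(\{0\})=\mathrm d$ and $M(\widehat\omega)=\omega(\{0\})=h(0)=1$, the last equality because $0\in W-W$ and $h\equiv1$ on $W-W$; hence $\|\nu\|=\mathrm d$. Together with the standard value $\mathrm d=\gamma(\{0\})=\dens_{\cA}(\Lambda)$ of the autocorrelation at the origin, this gives the first displayed inequality.

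Finally, the \emph{in particular} statement follows at once from $\Lambda\subseteq\Fib$: this forces $\dens_{\cA}(\Lambda)\le\dens(\Fib)=\tau/\sqrt5$, the density of the Fibonacci model set being the window length $\tau$ divided by the covolume $\sqrt5$ of $\cL$. I expect the only real obstacle to be pinning the constant down to $\mathrm d$ rather than a larger multiple of it: the crude convolution bound only sees the total mass $\|\nu\|$, so the whole point is the mean computation (equivalently, the normalization built into Theorem~\ref{theorem:fib ping pong}) identifying $\|\nu\|$ with $\gamma(\{0\})$; the passage to the larger window $[-\frac{1}{2},\frac{3}{2}]$ is then cost-free.
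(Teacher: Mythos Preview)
Your argument is correct and follows the same overall architecture as the paper: write $T_t\widehat\gamma-\widehat\gamma=(T_t\widehat\omega-\widehat\omega)*\nu$, bound the seminorm of the convolution by $\|\nu\|$ times a seminorm of $T_t\widehat\omega-\widehat\omega$, identify $\|\nu\|$ with $\gamma(\{0\})$, and finish with $\dens_{\cA}(\Lambda)\le\dens(\Fib)=\tau/\sqrt5$.

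The one genuine difference is in how $\|\nu\|$ is computed. The paper does not take means: it uses the explicit description $\nu=|\widecheck\varphi|^2\,\widehat\gamma$ from Theorem~\ref{theorem:fib ping pong}(b) and the defining duality of the Fourier transform to get
\[
\|\nu\|=\nu(\RR)=\int_{\RR}|\widecheck\varphi|^2\,d\widehat\gamma=\int_{\RR}\varphi*\widetilde\varphi\,d\gamma=\gamma(\{0\}),
\]
the last equality holding because $\varphi$ is chosen with $\supp(\varphi*\widetilde\varphi)\cap\supp(\gamma)=\{0\}$ and $(\varphi*\widetilde\varphi)(0)=1$. This is shorter and self-contained: it uses only the construction already on the table and avoids having to justify that $M(\widehat\omega)$ exists and that $M(\widehat\omega*\nu)=M(\widehat\omega)\,\nu(\RR)$ (both true here, via twice Fourier transformability, but extra work). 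Conversely, your convolution bound $\|\mu*\nu\|_K\le\|\nu\|\,\|\mu\|_K$ with the \emph{same} $K$ is marginally cleaner than the paper's appeal to \cite[Lemma~6.1]{NS20a}, which needs $K\subseteq\mathrm{Int}(K_1)$ and is the reason the paper passes to $[-\tfrac12,\tfrac32]$ in the estimate rather than afterwards by monotonicity as you do.
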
\qed

By combining Theorem~\ref{fib2} with Theorem~\ref{fib 3} we get (\cite[Theorem 2.15]{NS21}):

\begin{theorem}\label{fib support} Let $0< \eps < 1$ and $t \in \hoplam([-\frac{4\eps^{\frac{4}{3}} }{10^5}, \frac{4\eps^{\frac{4}{3}} }{10^5}])$. Then, for all $\Lambda \subseteq \Fib$ and all autocorrelations $\gamma$ of $\Lambda$, we have
\begin{align*}
&\max \left\{
\| T_t  \left(\widehat{\gamma}\right) -  \left(\widehat{\gamma}\right) \|_{[0,1]} ; \| T_t  \left(\widehat{\gamma}\right)_{\mathsf{pp}} -  \left(\widehat{\gamma}\right)_{\mathsf{pp}} \|_{[0,1]}  ; \right. \\
&\left. \| T_t  \left(\widehat{\gamma}\right)_{\mathsf{ac}} -  \left(\widehat{\gamma}\right)_{\mathsf{ac}} \|_{[0,1]} ;
\| T_t  \left(\widehat{\gamma}\right)_{\mathsf{sc}} -  \left(\widehat{\gamma}\right)_{\mathsf{sc}} \|_{[0,1]} \right\} < \eps
\end{align*}
In particular, $\widehat{\gamma}, \left(\widehat{\gamma}\right)_{\mathsf{pp}},  \left(\widehat{\gamma}\right)_{\mathsf{ac}}$ and $\left(\widehat{\gamma}\right)_{\mathsf{sc}} $ are norm almost periodic measures. \qed
\end{theorem}

\section{Pure point spectrum of weak Meyer sets}\label{sect pure point}

Throughout this paper, $G$ denotes a second countable locally compact Abelian group (LCAG). We should note here in passing that metrisability of $G$ is only used in an implicit way whenever we refer to the autocorrelation of a point set (or measure). In the literature, the autocorrelation is defined as the limit along a subsequence of our averaging sequence, and the existence of such subsequences relies on the second countability of $G$.
One can easily get around this issue by working with a subnet of the averaging sequence, or even more generally by starting with a van Hove net (in which case it is likely that we can also drop the $\sigma$-compactness assumption).
Anyhow, since the basic theory of diffraction using van Hove nets is not setup yet, and there is no good reference in this direction, we prefer to restrict to second countable LCAG's for this project.

We will assume that the reader is familiar with cut-and-project schemes, Delone sets and Meyer sets, Radon measures, total variation of a measure, convolution between functions and measures, van Hove sequences, autocorrelation and diffraction, norm almost periodicity, and refer the unfamiliar reader to \cite{TAO,NS11,NS20a}.

In this paper we are interested in translation bounded measures supported inside model sets. Let us briefly recall that a measure $\mu$ is called translation bounded if, for all compact sets $K \subseteq G$ we have
\[
\| \mu \|_{K}:= \sup_{t \in G} \left| \mu \right|(t+K) < \infty \,.
\]
Here, $\left| \mu \right|$ denotes the total variation measure of $\mu$ (see \cite[Page 252]{Ped} for the definition). As usual, we denote by $\cM^\infty(G)$ the space of translation bounded measures.

\smallskip

As we mentioned in the introduction, the goal of this paper is to describe properties of the spectrum of Meyer sets in terms of covering model sets, and that relative denseness of Meyer sets can be replaced by weaker properties. Because of this, we introduce the following definition.

\begin{definition} A set\/ $\Lambda$ in a LCAG\/ $G$ is called a \textbf{weak Meyer set} if it is a subset of a model set.
\end{definition}

It is easy to see that a set\/ $\Lambda$ is a weak Meyer set if and only if it is a subset of a Meyer set. Moreover, any subset of a weak Meyer set is a weak Meyer set.
Finally, a weak Meyer set $\Lambda \subseteq G$ is a Meyer set if and only if\/ $\Lambda$ is relatively dense.

The maximal density weak model sets studied in \cite{BHS,KR,KR2} as well as the weak model sets with precompact Borel windows \cite{KRS} are weak Meyer sets, which are not necessarily Meyer sets. Any subset of a weak model set is a weak Meyer set. $\cB$-free systems and their subsets \cite{DKK,GK,KKL,KLRS} also provide many interesting examples of weak Meyer sets.

In fact, any weak Meyer set $\Lambda$ is a subset of a model set, and the cut and project scheme can be chosen such that the $\star$-mapping is one to one. As $G$ is $\sigma$-compact, both $\Lambda$ and the covering model set $\oplam(W)$ are countable, and hence so are their images under the $\star$-mapping. By simply picking $\Lambda^\star$ as the window, or by starting with $W$ and eliminating the extra points in $\oplam(W) \backslash \Lambda$, one can construct a Borel window $W'$ which gives $\Lambda =\oplam(W')$ as a weak model set. This shows that the notions of weak Meyer sets and weak model sets coincide.

\medskip

Let us briefly remind the reader here the concept of a cut-and-project scheme. A cut-and-project scheme (or simply CPS) is a triple $(G, H, \cL)$ consisting of our group $G$, a LCAG $H$, and a lattice $\cL \subseteq G \times H$, with the following two extra properties:
\begin{itemize}
  \item{} The restriction $\pi_G|_{\cL}$ of the projection $\pi_{G} : G \times H \to G$ to $\cL$ is one to one.
  \item{} $\pi_{H}(\cL)$ is dense in $H$.
\end{itemize}
Given a cut-and-project scheme $(G, H, \cL)$, and a set $W \subseteq H$, we denote
\[
\oplam(W):= \{ \pi_G(l): l \in \cL, \pi_{H}(l) \in W \} \,.
\]

\smallskip
Next, let us briefly recall that the dual group $\widehat{G}$ is defined as the group of continuous homomorphisms $\chi : G \to U(1):=\{z \in \CC : |z| =1 \}$.
This becomes a LCAG under a topology for which the sets
\begin{equation}\label{dualbas}
N(K, \eps) := \{ \chi \in \widehat{G} : \left| \chi(x)-1 \right| <1 \forall x \in K \} \,
\end{equation}
with $\eps >0$ and compact $K \subseteq G$, form a basis of open sets at $0 \in \widehat{G}$ (see \cite[Sect.~1.2]{RUD} for details).

\smallskip
Given a cut-and-project scheme $(G, H, \cL)$, the annihilator $\cL^0$ in $\widehat{G \times H} = \widehat{G} \times \widehat{H}$ of $\cL$ is a lattice. Moreover, $(\widehat{G}, \widehat{H}, \cL^0)$ is a cut-and-project scheme \cite{Moody}. We refer to this as the dual cut-and-project scheme. To emphasize that we work in this dual space, for $W \subseteq \widehat{H}$, we will use the notation
\[
\hoplam(W):= \{ \pi_{\widehat{G}}(w): w \in \cL^0, \pi_{\widehat{H}}(w) \in W \} \,.
\]

\medskip

To make things easier to follow, we will use the following setting.

\begin{tcolorbox}
For this entire section, $(G,H, \cL)$ is a fixed CPS and $W \subseteq H$ is a fixed compact set. For $\eps >0$, we set
\begin{align*}
W_\eps&:= N(W-W, \eps) \,,\\
B_\eps &:= \hoplam(W_\eps) \,.
\end{align*}
Moreover, for each $(\chi, \chi^\star) \in \cL^0$ we set
\[
s(\chi):= \sup \left\{ \left| \chi^\star(t) -1 \right| : t \in W-W \right\} \,.
\]
\end{tcolorbox}
Note that since $W$ is compact, so is $W-W$. Therefore, the continuous function $\left| \chi^\star (t) -1 \right|$ attains its maximum on $W-W$. In particular,
for all $\chi \in B_\eps$, we have
\begin{equation}\label{eqschi}
s(\chi) < \eps \,.
\end{equation}

\begin{remark}
\begin{itemize}
  \item[(a)]Since $W_\eps$ is open, $B_\eps$ is relatively dense \cite{MOO}.
\item[(b)] If $0< \eps < \eps'$, then $B_{\eps} \subseteq B_{\eps'}$.
\end{itemize}
\end{remark}

Similarly to subsets of $\Fib$, we can prove that any weak Meyer subset $\Lambda \subseteq \oplam(W)$ with non-trivial Bragg spectrum, has a Bragg peak at each $\chi \in B_1$. Moreover, for all $\eps>0$ small enough, $\Lambda$ has a high intensity Bragg peak at each $\chi \in B_\eps$.

\begin{theorem}\label{thm bragg meyer set} Let $\Lambda \subseteq \oplam(W)$ and let $\gamma_\Lambda$ be an autocorrelation of $\Lambda$. If \eqref{eq:I>0} holds, then,
\begin{itemize}
  \item[(a)] $\Lambda$ has a Bragg peak at each $\chi \in B_1$ of intensity
\[
\widehat{\gamma_{\Lambda}}(\{ \chi\}) \geq (1-s(\chi)) I >0 \,.
\]
  \item[(b)] For each $0< \eps <1$ and each $\chi \in B_\eps$,  $\Lambda$ has a Bragg peak at $\chi$ of intensity
\[
\widehat{\gamma_{\Lambda}}(\{ \chi\}) \geq (1-\eps) I \,.
\]
\end{itemize}
\end{theorem}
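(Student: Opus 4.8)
The plan is to follow the Fibonacci computation of Proposition~\ref{prop intensity bragg peaks fib} essentially verbatim, simply replacing the explicit Fibonacci bound $2\pi a\tau$ by the abstract quantity $s(\chi)$. First I would record that, since $\Lambda \subseteq \oplam(W)$, the autocorrelation $\gamma = \gamma_\Lambda$ is supported inside
\[
\Lambda - \Lambda \subseteq \oplam(W) - \oplam(W) \subseteq \oplam(W-W) \,,
\]
so that every $z$ appearing below lies in $L$ and satisfies $z^\star \in W - W$. Then I would invoke the intensity formula (Theorem~\ref{thm:intens}) for both $\chi = 0$ and a general $\chi \in B_\eps$, applied along a fixed van Hove sequence, to write $I = \widehat{\gamma_\Lambda}(\{0\})$ and $\widehat{\gamma_\Lambda}(\{\chi\})$ as limits of the averaged sums $\frac{1}{|B_n|}\sum_{z}\overline{\chi(z)}\,\gamma(\{z\})$, where $\gamma(\{z\}) \geq 0$.

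The crux is the pointwise estimate of $|1 - \overline{\chi(z)}|$ on the support of $\gamma$. Here I would use the defining property of the dual lattice: since $(\chi, \chi^\star) \in \cL^0$, Theorem~\ref{thm dual cps} gives $\chi(z)\chi^\star(z^\star) = 1$ for every $z \in L$, hence $\overline{\chi(z)} = \chi^\star(z^\star)$. Consequently, for $z \in \supp(\gamma)$ (so that $z^\star \in W - W$),
\[
\bigl| 1 - \overline{\chi(z)} \bigr| = \bigl| 1 - \chi^\star(z^\star) \bigr| \leq \sup_{t \in W-W} \bigl| \chi^\star(t) - 1 \bigr| = s(\chi) \,.
\]
This is exactly the abstract replacement for Proposition~\ref{prop epsilon dual}, and it is the only place the cut-and-project structure enters.

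With this in hand I would assemble the bound as in Proposition~\ref{prop intensity bragg peaks fib}. Since $\widehat{\gamma_\Lambda}$ is a positive measure, $\widehat{\gamma_\Lambda}(\{\chi\})$ is a nonnegative real, and
\[
I - \widehat{\gamma_\Lambda}(\{\chi\}) = \Bigl| \widehat{\gamma_\Lambda}(\{0\}) - \widehat{\gamma_\Lambda}(\{\chi\}) \Bigr| \leq \limsup_n \frac{1}{|B_n|} \sum_{z \in (\Lambda - \Lambda)\cap B_n} \bigl| 1 - \overline{\chi(z)} \bigr| \, \gamma(\{z\}) \leq s(\chi)\, I \,,
\]
using the positivity of the coefficients $\gamma(\{z\})$ and the estimate above, together with $\limsup_n \frac{1}{|B_n|}\sum_z \gamma(\{z\}) = I$. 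This yields $\widehat{\gamma_\Lambda}(\{\chi\}) \geq (1 - s(\chi))I$. For (a), membership $\chi \in B_1$ forces $s(\chi) < 1$ by \eqref{eqschi}, so the bound is strictly positive; for (b), $\chi \in B_\eps$ forces $s(\chi) < \eps$ by \eqref{eqschi}, whence $(1 - s(\chi))I \geq (1-\eps)I$. The only genuinely delicate points are the identity $\overline{\chi(z)} = \chi^\star(z^\star)$ on $\supp(\gamma)$ and checking that the positivity of $\widehat{\gamma_\Lambda}$ legitimizes both dropping the absolute value around $I - \widehat{\gamma_\Lambda}(\{\chi\})$ and pulling the factor $s(\chi)$ through the $\limsup$; everything else is the routine averaging bookkeeping already carried out in the Fibonacci case.
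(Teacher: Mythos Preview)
Your proposal is correct and follows essentially the same route as the paper's own proof: reduce to the support containment $\supp(\gamma_\Lambda)\subseteq\oplam(W-W)$, use the dual-lattice relation $\chi(z)\chi^\star(z^\star)=1$ to get the pointwise bound $|1-\overline{\chi(z)}|\le s(\chi)$ on that support, feed this into the averaging formula of Theorem~\ref{thm:intens}, and conclude via \eqref{eqschi}. The only cosmetic difference is that the paper bounds $|\widehat{\gamma_\Lambda}(\{0\})-\widehat{\gamma_\Lambda}(\{\chi\})|$ first and then applies the triangle inequality, whereas you write $I-\widehat{\gamma_\Lambda}(\{\chi\})=|I-\widehat{\gamma_\Lambda}(\{\chi\})|$; since $x\le|x|$ anyway, the desired inequality follows regardless of whether $\widehat{\gamma_\Lambda}(\{\chi\})\le I$.
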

\begin{proof}
The proof is similar to the one in Prop.~\ref{prop intensity bragg peaks fib}. Let $(\chi, \chi^\star) \in \cL^0$. Then, for all $t \in L$ we have
\[
1=(\chi, \chi^\star)(t,t^\star)=\chi(t) \chi^\star(t^\star) \,.
\]
Thus, for all $t \in \oplam(W-W)$, we have
\[
\left|1-\chi(t) \right| = \left|1- \overline{\chi^\star(t^\star)} \right| \leq s(\chi) \,.
\]
Now, \cite[Theorem~4.10.14]{MoSt} gives
\begin{align*}
\widehat{\gamma_{\Lambda}}(\{ \chi\}) &= \lim_n \frac{1}{|A_n|} \left( \sum_{z \in (\Lambda -\Lambda)\cap A_n } \overline{\chi(z)} \gamma(\{z \}) \right)  \\
I=\widehat{\gamma_{\Lambda}}(\{ 0\}) &= \lim_n \frac{1}{|A_n|} \left( \sum_{z \in (\Lambda -\Lambda)\cap A_n }  \gamma(\{z \}) \right) \,.
\end{align*}
Using $\supp( \gamma_{\Lambda}) \subseteq  \oplam(W-W)$, we get
\begin{align*}
\left|\widehat{\gamma_{\Lambda}}(\{ 0\}) - \widehat{\gamma_{\Lambda}}(\{ \chi\}) \right|&=\lim_n\frac{1}{|A_n|} \left| \left( \sum_{z \in (\Lambda -\Lambda)\cap A_n } (1-\overline{\chi(z)}) \cdot \gamma(\{z \}) \right) \right| \\
  &\leq \limsup_n\frac{1}{|A_n|} \sum_{z \in (\Lambda -\Lambda)\cap A_n } s(\chi) \cdot \gamma(\{z \}) = s(\chi) \widehat{\gamma_{\Lambda}}(\{ 0\}) \,.
\end{align*}
Therefore, by the triangle inequality and positivity of $\widehat{\gamma}$ we get
\[
 \widehat{\gamma_{\Lambda}}(\{ \chi\}) \geq  \widehat{\gamma_{\Lambda}}(\{0\})- s(\chi) \widehat{\gamma_{\Lambda}}(\{ 0\}) =(1-s(\chi)) I \,.
\]
(a) and (b) follow immediately.
\end{proof}
Next, we show that for each weak model set $\oplam(W)$ and each $\eps>0$, we can construct a relatively dense set in $\widehat{G}$ of
common $\eps$-sup almost periods for the pure point component of diffraction spectra, for all subsets of $\oplam(W)$ (see \cite{NS2,NS11} for sup almost periodicity).
\begin{theorem} Let $\oplam(W)$ be any weak model set,  and let $\eta$ be an autocorrelation of $\oplam(W)$ along some averaging sequence $\cA$. Set $\alpha = \widehat{\eta}(\{0\})$, let $\Lambda \subseteq \oplam(W)$, and let $\gamma$ be an autocorrelation of $\Lambda$ along some subsequence of $\cA$. Then, for all $\chi \in B_{\frac{\eps}{\alpha+1}}$, and all $\psi \in \widehat{G}$, we have
\[
\left| \widehat{\gamma}(\{\psi + \chi\})- \widehat{\gamma}(\{\psi\}) \right| < \eps \,.
\]
\end{theorem}
\begin{remark}
The set $B_{\frac{\eps}{\alpha+1}}$ is relatively dense and depends on $\oplam(W)$ and $\eps$, but it is independent of the choice of $\Lambda$.
\end{remark}
\begin{proof}
Note first that $\gamma \leq \eta$.
By \cite[Theorem~4.10.14]{MoSt} we have
\begin{align*}
\left|\widehat{\gamma}(\{ \psi + \chi\}) - \widehat{\gamma}(\{ \psi\}) \right|&=\lim_n\frac{1}{|A_n|} \left| \left( \sum_{z \in (\Lambda -\Lambda)\cap A_n } (\overline{\psi(z)}\overline{\chi(z)}-\overline{\psi(z)}) \cdot \gamma(\{z \}) \right) \right| \\
  &\leq \limsup_n\frac{1}{|A_n|} \sum_{z \in (\Lambda -\Lambda)\cap A_n } \frac{\eps}{\alpha+1} \cdot \eta(\{z \})\\
  & = \frac{\eps}{\alpha+1} \widehat{\eta}(\{ 0\}) = \frac{\eps}{\alpha+1} \alpha < \eps \,.
\end{align*}
This proves the claim.
\end{proof}
\bigskip
Let us complete the section by observing that the proof of Theorem~\ref{thm bragg meyer set} extends easily to positively weighted Dirac combs with weak Meyer set support. Indeed, the proof of Theorem~\ref{thm bragg meyer set} only uses the fact that $\gamma$ is a positive measure, and that $\supp(\gamma) \subseteq \oplam(W-W)$. Therefore, in a similar way, we get:
\begin{theorem} Let $\mu$ be any positive, translation bounded measure, with the property that $\supp(\mu) \subseteq \oplam(W)$. Let $\gamma$ be any autocorrelation of $\mu$, and assume that \eqref{eq:I>0} holds. Then,
\begin{itemize}
  \item[(a)] The measure $\mu$ has a Bragg peak at each $\chi \in B_1$ of intensity
\[
\widehat{\gamma}(\{ \chi\}) \geq (1-s(\chi)) I >0 \,.
\]
  \item[(b)] For each $0< \eps <1$ and each $\chi \in B_\eps$,  $\mu$ has a Bragg peak $\chi$ of intensity
\[
\widehat{\gamma}(\{ \chi\}) \geq (1-\eps) I \,.
\]\qed
\end{itemize}
\end{theorem}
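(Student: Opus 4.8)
The plan is to repeat, essentially verbatim, the argument of Theorem~\ref{thm bragg meyer set}, observing that the only features of the Dirac comb $\gamma_\Lambda$ which actually entered that proof were the support inclusion $\supp(\gamma) \subseteq \oplam(W-W)$, the intensity formula of Theorem~\ref{thm:intens}, and the positivity of $\widehat{\gamma}$. All three of these remain available for an arbitrary positive weighted Dirac comb with weak Meyer set support, so no new idea is required.

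First I would fix $(\chi, \chi^\star) \in \cL^0$ and recall that the defining property of $\cL^0$ gives $\chi(t)\chi^\star(t^\star)=1$ for every $t \in L$. Consequently, for each $t \in \oplam(W-W)$ one has $\left| 1-\chi(t)\right| = \left| 1-\overline{\chi^\star(t^\star)}\right| \leq s(\chi)$, exactly as before. Since $\mu$ is positive with $\supp(\mu) \subseteq \oplam(W)$, it is a positive weighted Dirac comb with weak Meyer set support; hence its autocorrelation $\gamma$ exists (along a suitable van Hove sequence), is itself a positive measure, is Fourier transformable with $\widehat{\gamma}$ positive, and satisfies $\supp(\gamma) \subseteq \oplam(W)-\oplam(W) \subseteq \oplam(W-W)$.

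Next I would apply Theorem~\ref{thm:intens} to both $I=\widehat{\gamma}(\{0\})$ and $\widehat{\gamma}(\{\chi\})$ along that van Hove sequence, subtract, and bound the resulting difference term by term using the pointwise estimate $\left| 1-\overline{\chi(z)}\right| \leq s(\chi)$ valid on $\supp(\gamma)$. This yields $\left| I-\widehat{\gamma}(\{\chi\})\right| \leq s(\chi)\, I$, and the triangle inequality together with the positivity of $\widehat{\gamma}$ then gives $\widehat{\gamma}(\{\chi\}) \geq (1-s(\chi))\, I$. Part (a) follows because $s(\chi)<1$ on $B_1$ by \eqref{eqschi}, and part (b) because $s(\chi)<\eps$ on $B_\eps$.

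I do not anticipate any genuine obstacle, since the computation is literally the same as in Theorem~\ref{thm bragg meyer set}. The only point that warrants a word of justification, rather than a new argument, is that the structural inputs—support location, the intensity formula, and the positivity of the diffraction measure—hold for \emph{any} positive measure supported inside a weak model set, and not merely for the indicator Dirac comb $\gamma_\Lambda$ of a subset $\Lambda$. This is precisely what the results on weighted Dirac combs with weak Meyer set support recalled in Section~\ref{sect Preliminaries} guarantee, which is why the author is justified in asserting that the proof is identical and may be omitted.
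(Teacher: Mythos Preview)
Your proposal is correct and matches the paper's approach exactly: the paper itself states that the proof is identical to that of Theorem~\ref{thm bragg meyer set} and omits it, and you have correctly identified the three structural ingredients (support inclusion $\supp(\gamma)\subseteq\oplam(W-W)$, the intensity formula of Theorem~\ref{thm:intens}, and positivity of $\gamma$ ensuring $|\gamma(\{z\})|=\gamma(\{z\})$) that carry over verbatim to a positive weighted Dirac comb.
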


\section{The ping-pong lemma for Model sets}\label{sect ping}

In this section we review and slightly improve the ping-pong lemma of \cite{NS20a}. This result can be seen as the result for subsets of weak model sets which is similar to the periodicity of diffraction for subsets of lattices (see Theorem~\ref{thm:1}) . We will discuss this further in Section~\ref{sect:diss}.

To make the presentation easier to follow we introduce the following definition.

\begin{definition}\label{def:ping-pong} Let $\Lambda \subseteq G$ be uniformly discrete. We say that $\omega \in \cM(G)$ is a \textbf{ping-pong measure} for $\Lambda$ if it satisfies the following properties:
\begin{itemize}
  \item{} $\supp(\omega)$ is uniformly discrete.
  \item{} $\omega(\{ x \})=1$ for all $x \in \Lambda$.
  \item{} $\omega$ is twice Fourier transformable.
  \item{} $\widehat{\omega}$ is pure point.
\end{itemize}
\end{definition}

Since every twice Fourier transformable measure is translation bounded (see for example \cite[Thm.~4.9.23]{MoSt}), any ping-pong measure is automatically translation bounded.

\smallskip

Let us now recall the following result, which shows that every weak Meyer set admits a ping-pong measure. We should emphasize here that the existence of a ping-pong measure
for each choice of $y \notin W$ is used in the proof of Thm.~\ref{existence geb}, and will be important to establish Theorem~\ref{FCDM} (c), and its consequences.

\begin{proposition}\label{existence ping pong}\cite{NS20a}Let $(G,H, \cL)$ be a CPS, let $W \subseteq H$ be a compact set and let $y \in H \backslash W$.  Then,
\begin{itemize}
  \item[(a)] There exists some $h \in K_2(H)=\Span \{ f*g :f,g \in \Cc(H)\}$ such that $h\equiv 1$ on $W$ and $h(y)=0$.
  \item[(b)] For any $h$ as in (a), the measure
\[
  \omega= \omega_h :=\sum_{(x,x^\star) \in \cL} h(x^\star) \delta_x
\]
  is a ping-pong measure for $\oplam(W)$. Moreover,
\[
\widehat{\omega}=\dens(\cL) \omega^\star_{\check{h}}:= \dens(\cL) \sum_{(\chi,\chi^\star) \in \cL^0} h(\chi^\star) \delta_\chi \,.
\]
\end{itemize}
\end{proposition}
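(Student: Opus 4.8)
The plan is to handle the two parts separately: part~(a) is a separation argument inside $H$, while part~(b) is the Poisson Summation Formula (PSF) for the lattice $\cL \subseteq G \times H$, applied once to compute $\widehat{\omega_h}$ and a second time to obtain twice transformability.

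For part~(a), I would exploit that $W$ is compact and $y \notin W$. Fix a symmetric precompact open neighbourhood $V$ of $0 \in H$ small enough that $y \notin W + V + V + V$; such $V$ exists because $H$ is locally compact Hausdorff and $W$ is compact with $y \notin W$. By Urysohn's lemma choose $g_1 \in \Cc(H)$ with $0 \le g_1 \le 1$, $g_1 \equiv 1$ on $W + V$ and $\supp(g_1) \subseteq W + V + V$, and $g_2 \in \Cc(H)$ with $g_2 \ge 0$, $\supp(g_2) \subseteq V$ and $\int_H g_2 \, \dd \theta_H = 1$. Put $h := g_1 * g_2$, which lies in $K_2(H)$. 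For $w \in W$ and $v \in \supp(g_2) \subseteq V$ we have $w - v \in W - V = W + V$, so $g_1(w-v) = 1$ and hence $h(w) = \int_H g_2 \, \dd \theta_H = 1$; for every $v \in \supp(g_2)$ we have $y - v \in y + V$, which is disjoint from $\supp(g_1) \subseteq W + V + V$ because $y \notin W + V + V + V$, so $g_1(y-v) = 0$ and $h(y) = 0$. This proves~(a).

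For part~(b), several properties are immediate: $\omega_h$ is by definition a weighted Dirac comb, hence pure point, and for $x \in \oplam(W)$ we have $x^\star \in W$, so $\omega_h(\{x\}) = h(x^\star) = 1$. Since $h \in K_2(H)$ is compactly supported, $\omega_h$ is supported inside the weak model set $\oplam(\supp(h))$, which is uniformly discrete, furnishing an open set $U \ni 0$ for which $\supp(\omega_h)$ is $U$-uniformly discrete. The core of the argument is the identity $\widehat{\omega_h} = \dens(\cL)\,\omega^\star_{\check h}$. The membership $h \in K_2(H)$ guarantees that $\check h$ is integrable and summable along the lattice $\cL^0$, which is exactly what is needed both for the candidate measure $\dens(\cL)\,\omega^\star_{\check h} = \dens(\cL)\sum_{(\chi,\chi^\star)\in\cL^0}\check h(\chi^\star)\delta_\chi$ to be translation bounded and for the PSF to apply. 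To verify Fourier transformability I would test against $\varphi * \tilde{\varphi}$ for $\varphi \in \Cc(G)$: since $\varphi * \tilde{\varphi} \in K_2(G)$ and $h \in K_2(H)$, the tensor $(\varphi*\tilde{\varphi})\otimes h$ lies in $K_2(G\times H)$, the natural domain of the PSF for $\cL$, and
$$
\omega_h(\varphi*\tilde{\varphi}) = \sum_{\ell \in \cL}\bigl((\varphi*\tilde{\varphi})\otimes h\bigr)(\ell) = \dens(\cL)\sum_{(\chi,\chi^\star)\in\cL^0} |\check\varphi(\chi)|^2\, \check h(\chi^\star) = \dens(\cL)\,\omega^\star_{\check h}\bigl(|\check\varphi|^2\bigr) \,,
$$
which is precisely the defining relation for $\widehat{\omega_h} = \dens(\cL)\,\omega^\star_{\check h}$. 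In particular $\widehat{\omega_h}$ is again a weighted Dirac comb, hence pure point.

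Twice transformability then follows from the self-dual shape of the PSF. Under the canonical identification $\widehat{\widehat{G\times H}} \simeq G\times H$ one has $(\cL^0)^0 = \cL$ (Theorem~\ref{thm dual cps}), and $\check h$ retains the integrability and decay coming from $h \in K_2(H)$. Running the previous paragraph with $\cL$ replaced by $\cL^0$ and $h$ by $\check h$ — testing $\widehat{\omega_h}$ against $\psi*\tilde{\psi}$ for $\psi \in \Cc(\widehat{G})$ and invoking Fourier inversion, which returns the reflection $h^\dagger$ — shows that $\widehat{\omega_h}$ is Fourier transformable with $\widehat{\widehat{\omega_h}} = (\omega_h)^\dagger$, so $\omega_h$ is twice Fourier transformable. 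I expect the main obstacle to be the rigorous justification of the PSF for the tensor functions and, hand in hand with it, the translation-boundedness and absolute convergence of $\dens(\cL)\,\omega^\star_{\check h}$ on the dense set $\pi_G(\cL^0)$; this is exactly the point at which $h \in K_2(H)$, forcing $\check h$ to be integrable and lattice-summable, is indispensable, and it is the technical heart that the PSF formalism of \cite{CRS,CRS2} and the estimates of \cite{NS20a} are designed to supply.
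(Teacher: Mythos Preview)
Your proposal is essentially correct and matches the approach of the cited references: the paper's own proof is simply a pointer to \cite[Lemma~3.1 and Proposition~3.2]{NS20a}, and what you have written is precisely a reconstruction of those arguments --- the Urysohn-type construction of $h\in K_2(H)$ for~(a), and the lattice PSF of \cite{CRS} for~(b).

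One remark on twice Fourier transformability. Rather than running the PSF a second time with $\check h$ (which, as you note, requires controlling summability of $\check h$ along $\cL^0$ even though $\check h$ is no longer compactly supported), the paper --- and \cite{NS20a} --- take a shorter route: once $\omega_h$ is known to be Fourier transformable and supported inside the weak model set $\oplam(\supp h)$, twice transformability follows directly from the general result that every Fourier transformable measure with Meyer set support is twice Fourier transformable (\cite[Theorem~5.9]{CRS} or \cite[Theorem~4.9.32]{MoSt}); see the proof of the Pong implication in Theorem~\ref{ping pong theorem} for the same device. This avoids the delicate step you flagged as the main obstacle.
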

\begin{proof}This follows from \cite[Lemma~3.1 and Proposition~3.2]{NS20a} and their proofs. Note here that $\omega$ is twice Fourier transformable
by \cite[Theorem~4.12]{CRS}.
\end{proof}

\smallskip

Let us note here in passing that most results in this section are based on the fact that each weak Meyer set $\Lambda$
admits a ping-pong measure $\omega$. It follows that many of these results can be generalized to
Fourier transformable measures $\gamma$, such that $\supp(\gamma)$ admits a ping-pong measure.

\smallskip

We can now prove the following slight improvement of the ping-pong lemma \cite[Lemma~3.3]{NS20a}. Note that in \cite[Lemma~3.3 (ii)]{NS20a} we have the extra assumption that $\gamma$ is Fourier transformable, which is actually unnecessary.

First, to make the proof easier to follow, we introduce the following definition. If $\omega$ is a  ping-pong measure for $\oplam(W)$ we say that $\varphi \in \Cc(G)$ is an \textbf{auxiliary function} for $\omega$ if $\varphi*\tilde{\varphi}(0)=1$ and $\supp(\omega)$ is
$\supp(\varphi*\tilde{\varphi})$-uniformly discrete. It is easy to see that any ping-pong measure admits auxiliary functions.

\begin{theorem}[Ping-pong lemma for cut-and-project sets]\label{ping pong theorem}
Let $(G,H, \cL)$ be a CPS, let $W \subseteq H$ be a compact set, let $\omega$ be a ping-pong measure for $\oplam(W)$ and let $\varphi$ be an auxiliary function. Then,
\begin{itemize}
  \item[(Ping)]If $\gamma$ is any Fourier transformable measure with $\supp(\gamma) \subseteq \oplam(W)$, then $\nu:= \left| \widecheck{\varphi} \right|^2 \widehat{\gamma}$ is a finite measure and
    \begin{displaymath}
\widehat{\gamma}=\widehat{\omega}* \nu \,.
    \end{displaymath}
  \item[(Pong)] If $\nu$ is a finite measure on $\widehat{G}$, then $\gamma:= (\widecheck{\nu}) \omega$ satisfies $\supp(\gamma) \subseteq \supp(\omega)$, $\gamma$ is Fourier transformable, and
    \begin{displaymath}
\widehat{\gamma}=\widehat{\omega}* \nu \,.
    \end{displaymath}
\end{itemize}
\end{theorem}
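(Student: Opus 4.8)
The plan is to prove the (Pong) direction first, treating it as the computational core, and then to deduce (Ping) from it together with a support argument. A preliminary observation makes the whole scheme work: since $W$ is compact and $\cL$ is discrete, $\oplam(W)$ (indeed $\oplam(W-W)$) is uniformly discrete, so any Fourier transformable measure $\gamma$ with $\supp(\gamma)\subseteq\oplam(W)$ is automatically pure point, $\gamma=\sum_{x\in\oplam(W)}\gamma(\{x\})\delta_x$. Throughout I write $f:=\varphi*\tilde\varphi$, so that $\supp(f)\subseteq U$, $f(0)=1$, and (by the convolution theorem for functions) $|\widecheck\varphi|^2=\widecheck f$ as functions on $\widehat G$.

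For \textbf{(Pong)}: since $\nu$ is finite, its inverse transform $\widecheck\nu\in\Cu(G)$ is bounded, so $\gamma:=\widecheck\nu\,\omega$ is a well-defined translation bounded measure with $\supp(\gamma)\subseteq\supp(\omega)$. To obtain $\widehat\gamma=\widehat\omega*\nu$ I would invoke the multiplication--convolution duality for Fourier transforms of measures: multiplying a Fourier transformable measure $\omega$ by the inverse transform $\widecheck\nu$ of a finite measure corresponds, on the dual side, to convolving $\widehat\omega$ with $\nu$. Here $\widehat\omega$ is translation bounded (being itself Fourier transformable, as $\omega$ is twice Fourier transformable) and $\nu$ is finite, so $\widehat\omega*\nu$ is a well-defined translation bounded measure; verifying the defining identity $\gamma(\phi*\tilde\phi)=(\widehat\omega*\nu)(|\widecheck\phi|^2)$ for all $\phi\in\Cc(G)$ then establishes transformability and the formula simultaneously.

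For \textbf{(Ping)}: finiteness of $\nu=|\widecheck\varphi|^2\widehat\gamma$ is immediate from the definition of Fourier transformability, which gives $\widecheck\varphi\in L^2(|\widehat\gamma|)$, whence $|\nu|(\widehat G)=\int|\widecheck\varphi|^2\,d|\widehat\gamma|<\infty$. I would then feed this $\nu$ into (Pong) to produce $\gamma':=\widecheck\nu\,\omega$, which is Fourier transformable with $\widehat{\gamma'}=\widehat\omega*\nu$, and reduce the claim to showing $\gamma'=\gamma$. For this I compute $\widecheck\nu=\widecheck{\,\widecheck f\,\widehat\gamma\,}=f^\dagger*\gamma$ (up to reflection) via the convolution theorem, legitimate because $\nu$ is finite; explicitly, at each $x\in\supp(\omega)$ one has $\widecheck\nu(x)=\sum_{y\in\supp(\gamma)}\gamma(\{y\})\,f(y-x)$. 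Now $\supp(\omega)$ is $U$-uniformly discrete and $\supp(\gamma)\subseteq\oplam(W)\subseteq\supp(\omega)$, while $\supp(f)\subseteq U\subseteq U-U$; since any two distinct points of $\supp(\omega)$ differ by an element outside $U-U$, the only surviving term is $y=x$. Hence $\widecheck\nu(x)=\gamma(\{x\})\,f(0)=\gamma(\{x\})$ for $x\in\supp(\gamma)$ and $\widecheck\nu(x)=0$ for $x\in\supp(\omega)\setminus\supp(\gamma)$. Using $\omega(\{x\})=1$ for all $x\in\oplam(W)$, this yields $\gamma'=\widecheck\nu\,\omega=\sum_{x\in\supp(\gamma)}\gamma(\{x\})\delta_x=\gamma$, completing the deduction.

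The main obstacle is the rigorous multiplication--convolution duality underlying (Pong): one multiplies an unbounded (merely translation bounded) measure $\omega$ by the bounded continuous function $\widecheck\nu$, and must justify that the transform is \emph{exactly} $\widehat\omega*\nu$ rather than an identity valid only in a weaker sense. This is precisely where the twice Fourier transformability of $\omega$ and the translation boundedness of $\widehat\omega$ are essential, and where I would rely on the existing convolution theorems for transformable measures (e.g.\ from \cite{ARMA1,MoSt,NS20a}). The remaining bookkeeping --- local finiteness of the convolution $\widecheck{\,\widecheck f\,\widehat\gamma\,}$ and the interchange of summation and transform when unpacking it on the uniformly discrete set $\supp(\omega)$ --- is routine given the $U$-uniform discreteness.
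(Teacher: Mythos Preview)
Your overall scheme is sound, and your derivation of (Ping) from (Pong) is a clean self-contained argument that essentially reproves \cite[Lemma~3.3(i)]{NS20a}, which the paper simply cites. The uniform-discreteness bookkeeping you outline (collapsing $\sum_y\gamma(\{y\})f(y-x)$ to $\gamma(\{x\})$ via $\supp(f)\subseteq U$ and $0\in U\subseteq U-U$) is exactly how that lemma works.

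Where you and the paper genuinely diverge is in (Pong), and in particular on the point you correctly flag as ``the main obstacle'': establishing that $\gamma=\widecheck{\nu}\,\omega$ is Fourier transformable. You defer this to a multiplication--convolution duality for transformable measures, to be cited from \cite{ARMA1,MoSt,NS20a}. The paper does \emph{not} cite such a result; it instead argues that $\omega$ is a linear combination of positive definite measures (a nontrivial fact, \cite[Theorem~8.5]{NS20a}), that $\widecheck{\nu}$ is a linear combination of continuous positive definite functions, and hence that $\gamma=\widecheck{\nu}\,\omega$ is a linear combination of positive definite measures --- therefore Fourier transformable. Only then does it invoke \cite[Lemma~4.9.26]{MoSt} (convolution of a transformable measure with a finite one) on the \emph{dual} side, computing $\widehat{\widehat{\omega}*\nu}=\gamma^\dagger=\widehat{\widehat{\gamma}}$ and concluding by injectivity. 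The point is that the available lemma goes in the convolution direction, not the multiplication direction; the positive-definiteness detour is how the paper bridges that gap. Your direct-verification route (showing $\gamma(\phi*\tilde\phi)=(\widehat{\omega}*\nu)(|\widecheck{\phi}|^2)$ via Fubini and the identity $\chi\cdot(\phi*\tilde\phi)=(\chi\phi)*\widetilde{(\chi\phi)}$) can be made to work and is arguably more elementary, but you would still need to check the $L^2(|\widehat{\omega}*\nu|)$ integrability condition on $\widecheck{\phi}$, which is where the translation boundedness of $\widehat{\omega}$ enters; this is not quite a one-line citation.
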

\begin{proof}
 \textbf{(Ping)} Follows from \cite[Lemma~3.3 (i)]{NS20a}.

\textbf{(Pong)} The only thing which was not proved in \cite[Lemma~3.3 (ii)]{NS20a} is the Fourier transformability of $\gamma$. We prove it here.

First, as we noted after Def.~\ref{def:ping-pong}, since $\omega$ is twice Fourier transformable, it is translation bounded. By \cite[Theorem~8.5]{NS20a}
the ping-pong measure $\omega$ is a linear combination of positive definite measures. The finite measure $\nu$ is a linear combination
of finite positive measures, and hence $\widehat{\nu}$ is a linear combination of continuous positive definite functions. Since the product of a continuous positive definite function, and a positive definite measure, is a positive definite measure \cite[Corollary~4.3]{ARMA1}, it follows that the measure
\[
\gamma = (\widecheck{\nu}) \omega
\]
is a linear combination of positive definite measures. Therefore, $\gamma$ is Fourier transformable. Moreover, $\widecheck{\nu} \in \Cu(G)$ and $\omega \in \cM^\infty(G)$ imply $\gamma \in \cM^\infty(G)$. As $\gamma$ is supported inside a Meyer set, by \cite[Theorem~5.7]{CRS} (or \cite[Theorem~4.9.32]{MoSt}), $\gamma$ is twice Fourier transformable.

Next, by \cite[Theorem 4.9.28]{MoSt} or \cite[Theorem~3.4]{ARMA} we have
\[
\widehat{ \hat{\gamma}} = \gamma^\dagger \,.
\]
Finally, $\omega $ is twice Fourier transformable by definition. Thus, by \cite[Lemma~4.9.26]{MoSt}, the measure $\widehat{\omega}*\nu$ is Fourier transformable, and
\[
\reallywidehat{\widehat{\omega}*\nu}= \widehat{\widehat{\omega}} \widehat{\nu}= \omega^\dagger \widehat{\nu}= \omega^\dagger \check{\nu}^\dagger = \gamma^\dagger \,.
\]
This shows that
\[
\reallywidehat{\widehat{\omega}*\nu}= \widehat{ \hat{\gamma}} \,.
\]
The injectivity of the Fourier transform \cite[Theorem~4.9.13]{MoSt} then gives $\widehat{\gamma}=\widehat{\omega}*\nu$. This completes the proof.
\end{proof}

\medskip

Next, let us recall the  \textbf{Fourier--Stieltjes algebra $B(G)$ of $G$},
\[
B(G):=\{ \widehat{\mu} : \mu \mbox{ is a finite measure on } \widehat{G} \} \,.
\]
The following is an immediate consequence of the ping-pong lemma:

\begin{theorem}\label{thm:BG} Let $(G,H,\cL)$ be a CPS, $W \subseteq H$ be a compact set, and let $\omega$ be a ping-pong measure for $\oplam(W)$.
Then, for any measure $\gamma$ with $\supp(\gamma) \subseteq \oplam(W)$, the following are equivalent:
\begin{itemize}
  \item[(i)] $\gamma$ is Fourier transformable.
  \item[(ii)] $\gamma$ is a linear combination of positive definite measures.
  \item[(iii)] There exists some $f \in B(G)$ such that
\begin{equation}\label{eq:BG}
  \gamma = f \omega \,.
\end{equation}
\end{itemize}
\end{theorem}
\begin{proof}
\noindent
(i) $\Longrightarrow$ (ii) follows from \cite[Theorem~8.5]{NS20a}.

\noindent
(ii) $\Longrightarrow$ (i) is obvious.

\noindent
(i) $\Longrightarrow$ (iii) By the ping-pong lemma $\widehat{\gamma}=\omega*\nu$ for some finite measure $\nu$, and
\[
\gamma = \check{\nu} \omega \,.
\]
Then $f= \check{\nu} \in B(G)$ satisfies \eqref{eq:BG}.

\noindent
(iii) $\Longrightarrow$ (i) Let $f \in B(G)$ be so that \eqref{eq:BG} holds. Let $\nu$ be the finite measure such that $f =\check{\nu}$. Then, by the Pong implication in the ping-pong lemma, the measure $\gamma= f \omega$ is Fourier transformable and
\[
\widehat{\gamma}=\widehat{\omega}*\nu \,.
\]
\end{proof}

For subsets of lattices, a similar result is proved in \cite[Thm.~1]{Ba}.
\medskip

Combining Theorem~\ref{thm:BG} with \cite[Theorem~4.1]{NS20a} we get:

\begin{theorem}[Existence of the generalized Eberlein decomposition]\cite[Theorem~4.1]{NS20a}\label{existence geb} Let $(G,H,\cL)$ be a CPS, and let $W \subseteq H$ be any compact set.

Let $\gamma$ be any Fourier transformable measure  with $\supp(\gamma) \subseteq \oplam(W)$. Then, there exist unique Fourier transformable measures $\gamma_{\mathsf{s}}, \gamma_{0\mathsf{s}}, \gamma_{0\mathsf{a}}$, supported inside $\oplam(W)$, such that
\begin{align*}
\gamma=\gamma_{\mathsf{s}} &+ \gamma_{0\mathsf{s}} + \gamma_{0\mathsf{a}} \\
\reallywidehat{\gamma_{\mathsf{s}}}= \left(\widehat{\gamma}\right)_{\mathsf{pp}} \,;\quad
\reallywidehat{\gamma_{0\mathsf{s}}} &=  \left(\widehat{\gamma}\right)_{\mathsf{sc}} \,;\quad
\reallywidehat{\gamma_{0\mathsf{a}}} =\left(\widehat{\gamma}\right)_{\mathsf{ac}} \,.  \\
\end{align*}
Moreover, given some ping-pong measures $\omega$ for $\oplam(W)$, there exist some functions $f_{\mathsf{s}}, f_{0\mathsf{s}}, f_{0\mathsf{a}} \in B(G)$, which are Fourier transforms of finite pure point, singular continuous, and absolutely continuous measures, respectively, such that
\begin{displaymath}
\gamma_{\mathsf{s}}= f_{\mathsf{s}} \omega \,;\,  \gamma_{0\mathsf{s}} =  f_{0\mathsf{s}} \omega \,;\, \gamma_{0\mathsf{a}} =f_{0\mathsf{a}} \omega \,.
\end{displaymath} \qed
\end{theorem}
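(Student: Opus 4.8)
The plan is to run the ping-pong Lemma (Theorem~\ref{ping pong theorem}) in both directions, using the Lebesgue decomposition of a single finite measure as the bridge. First I would fix a ping-pong measure $\omega$ for $\oplam(W)$ and an auxiliary function $\varphi$. Applying the (Ping) implication to the given Fourier transformable measure $\gamma$ produces the \emph{finite} measure $\nu := \left| \widecheck{\varphi} \right|^2 \widehat{\gamma}$ on $\widehat{G}$ with
\[
\widehat{\gamma} = \widehat{\omega} * \nu \,.
\]
Being finite, $\nu$ admits its ordinary Lebesgue decomposition $\nu = \nu_{\mathsf{pp}} + \nu_{\mathsf{sc}} + \nu_{\mathsf{ac}}$ into finite pure point, singular continuous and absolutely continuous parts.

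The heart of the argument is to show that convolution with $\widehat{\omega}$ respects this splitting, i.e.
\[
\left(\widehat{\gamma}\right)_{\mathsf{pp}} = \widehat{\omega} * \nu_{\mathsf{pp}} \,,\qquad \left(\widehat{\gamma}\right)_{\mathsf{sc}} = \widehat{\omega} * \nu_{\mathsf{sc}} \,,\qquad \left(\widehat{\gamma}\right)_{\mathsf{ac}} = \widehat{\omega} * \nu_{\mathsf{ac}} \,.
\]
This is where I expect the only genuine work to lie. Here $\widehat{\omega}$ is a translation bounded \emph{pure point} measure, say $\widehat{\omega} = \sum_j a_j \delta_{y_j}$ with $\{ y_j \}$ countable, and each $\nu$-part is finite, so convolution is the translation bounded superposition $\widehat{\omega} * \mu = \sum_j a_j\, T_{y_j} \mu$. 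I would verify type preservation piece by piece: $\widehat{\omega} * \nu_{\mathsf{pp}}$ is concentrated on the countable set $\bigcup_j (y_j + \supp \nu_{\mathsf{pp}})$, hence pure point; $\widehat{\omega} * \nu_{\mathsf{ac}}$ is absolutely continuous, since convolving with a finite absolutely continuous measure sends Lebesgue-null sets to mass zero; and $\widehat{\omega} * \nu_{\mathsf{sc}}$ is singular continuous, being atomless (its mass at a point $p$ is $\sum_j a_j\, \nu_{\mathsf{sc}}(\{ p - y_j \}) = 0$) and Lebesgue-singular (it is concentrated on the countable union $\bigcup_j (y_j + N)$ of null sets, where $N$ carries $\nu_{\mathsf{sc}}$). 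Comparing the sum of these three with $\widehat{\gamma} = \widehat{\omega} * \nu$ and invoking the uniqueness of the Lebesgue decomposition of $\widehat{\gamma}$ yields the three displayed identities.

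With the identities established, I would apply the (Pong) implication of Theorem~\ref{ping pong theorem} to each finite measure $\nu_{\mathsf{pp}}, \nu_{\mathsf{sc}}, \nu_{\mathsf{ac}}$ separately, obtaining Fourier transformable measures
\[
\gamma_{\mathsf{s}} := (\widecheck{\nu_{\mathsf{pp}}})\, \omega \,,\qquad \gamma_{0\mathsf{s}} := (\widecheck{\nu_{\mathsf{sc}}})\, \omega \,,\qquad \gamma_{0\mathsf{a}} := (\widecheck{\nu_{\mathsf{ac}}})\, \omega \,,
\]
all supported inside $\supp(\omega) \subseteq L$ and satisfying $\reallywidehat{\gamma_{\mathsf{s}}} = \widehat{\omega} * \nu_{\mathsf{pp}} = \left(\widehat{\gamma}\right)_{\mathsf{pp}}$, and similarly for the other two. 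Setting $f_{\mathsf{s}} := \widecheck{\nu_{\mathsf{pp}}}$, $f_{0\mathsf{s}} := \widecheck{\nu_{\mathsf{sc}}}$ and $f_{0\mathsf{a}} := \widecheck{\nu_{\mathsf{ac}}}$, these lie in $B(G)$ as inverse transforms of finite pure point, singular continuous and absolutely continuous measures, which is the ``moreover'' assertion. Summing the three Fourier identities and using linearity shows that the Fourier transform of $\gamma_{\mathsf{s}} + \gamma_{0\mathsf{s}} + \gamma_{0\mathsf{a}}$ equals $\widehat{\gamma}$, so injectivity of the Fourier transform on Fourier transformable measures \cite[Theorem~4.9.13]{MoSt} forces $\gamma = \gamma_{\mathsf{s}} + \gamma_{0\mathsf{s}} + \gamma_{0\mathsf{a}}$. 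The same injectivity, combined with uniqueness of the Lebesgue decomposition, gives uniqueness of the three pieces; in particular they do not depend on the chosen ping-pong measure.

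It remains to upgrade the support from $L$ to $\oplam(W)$. Since the three measures are pinned down by their Fourier transforms, they are independent of the choice of $\omega$; so for any fixed $x \in L \setminus \oplam(W)$ I may use Proposition~\ref{existence ping pong} with $y = x^\star \in H \setminus W$ to produce a ping-pong measure $\omega$ with $\omega(\{ x \}) = 0$. The representations $\gamma_{\mathsf{s}} = f_{\mathsf{s}}\, \omega$, $\gamma_{0\mathsf{s}} = f_{0\mathsf{s}}\, \omega$, $\gamma_{0\mathsf{a}} = f_{0\mathsf{a}}\, \omega$ then force $\gamma_{\mathsf{s}}(\{ x \}) = \gamma_{0\mathsf{s}}(\{ x \}) = \gamma_{0\mathsf{a}}(\{ x \}) = 0$, and letting $x$ range over $L \setminus \oplam(W)$ shows that all three are supported inside $\oplam(W)$. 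The single delicate point of the whole scheme is the spectral-type preservation under convolution with the pure point measure $\widehat{\omega}$; everything else is an orchestration of the two halves of the ping-pong Lemma with the uniqueness of the Lebesgue decomposition and the injectivity of the Fourier transform.
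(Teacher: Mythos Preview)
Your proposal is correct and follows essentially the same route as the paper: apply (Ping) to get $\widehat{\gamma}=\widehat{\omega}*\nu$, Lebesgue-decompose the finite measure $\nu$, observe that convolution with the pure point measure $\widehat{\omega}$ preserves spectral type, apply (Pong) to each piece, and then kill mass outside $\oplam(W)$ via Proposition~\ref{existence ping pong}. If anything, you supply more justification than the paper does for the spectral-type preservation step, which the paper dispatches with ``It follows immediately from here that\ldots''.
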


We will show that a similar decomposition holds, under much more general settings, in Section~\ref{sect SCF}.

\medskip

\section{Norm estimates for the diffraction of measures supported inside model sets}

If $\omega$ is a ping-pong measure for some weak model set $\oplam(W)$, then $\widehat{\omega}$ is a strong almost periodic measure, but it is not
norm almost periodic in general (see \cite{MoSt,NS11} for definition and properties of strong and norm almost periodic measures). If we can find a single ping-pong measure $\omega$ for $\oplam(W)$, such that $\widehat{\omega}$ is norm almost periodic, then it follows easily that, for each transformable measure $\gamma$ supported within $\oplam(W)$, the measure $\widehat{\gamma}$ is norm almost periodic. The existence of such a measure is shown in the proof of \cite[Theorem~7.1]{NS20a}:

\begin{proposition}\cite[Theorem~7.1]{NS20a}\label{prop nap} Let $(G,H,\cL)$ be a CPS, and let $W \subseteq H$ be any compact set. Then, there exists a ping-pong measure $\omega$ for $\oplam(W)$ such that $\widehat{\omega}$ is norm almost periodic. \qed
\end{proposition}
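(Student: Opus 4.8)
The plan is to reduce, via Proposition~\ref{existence ping pong}, to a single good choice of the building function $h$, and then to exhibit an explicit relatively dense set of norm almost periods for the resulting measure $\widehat{\omega}$. By the lemma following the definition of weak Meyer sets we may assume $H$ is metrisable and compactly generated. By Proposition~\ref{existence ping pong}, for any $h \in K_2(H)$ with $h \equiv 1$ on $W$ the measure $\omega = \omega_h$ is a ping-pong measure for $\oplam(W)$ with $\widehat{\omega} = \dens(\cL)\, \omega^\star_{\check h} = \dens(\cL)\sum_{(\chi,\chi^\star)\in\cL^0}\check{h}(\chi^\star)\delta_\chi$. Thus it suffices to produce one such $h$ for which this dual weighted comb is norm almost periodic, since that is precisely the conclusion we want.

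First I would construct $h$ so that its co-transform $\check h$ decays rapidly in the non-compact directions of $\widehat H$. Concretely, fix a precompact symmetric neighborhood $V$ of $0 \in H$ and a nonnegative $\phi \in \Cc(H)$, and take $h$ to be a suitably normalised high convolution power $h = c\, 1_{W+V} * \phi^{*n}$, with the normalisation chosen so that $h \equiv 1$ on $W$ and $n$ large. Then $\check h = c\, \widecheck{1_{W+V}}\,(\check\phi)^{\,n}$ is a product of $n+1$ co-transforms, and using the splitting of a compactly generated $H$ into a Euclidean, a free, and a compact factor, on each of which the co-transform of a convolution of bumps decays, one sees that for $n$ large enough $\check h$ decays faster than the fixed rate needed below. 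This is the analogue of the Fibonacci choice in Definition~\ref{def: ping poing fib}, where three convolution factors produce decay of order $|y|^{-4}$.

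Next I would verify norm almost periodicity directly, following the pattern of Theorem~\ref{fib2}. Fix a compact $K\subseteq\widehat G$ with non-empty interior. For $t \in \widehat G$ with $(t,t^\star)\in\cL^0$, reindexing the comb by $\chi \mapsto \chi + t$ yields
$$T_t\widehat\omega - \widehat\omega = \dens(\cL)\sum_{(\chi,\chi^\star)\in\cL^0}\bigl(\check h(\chi^\star - t^\star) - \check h(\chi^\star)\bigr)\delta_\chi \,,$$
so that $\|T_t\widehat\omega - \widehat\omega\|_K$ is controlled by the supremum over $s$ of the sum of $|\check h(\chi^\star - t^\star) - \check h(\chi^\star)|$ over the points $\chi \in s+K$ with $(\chi,\chi^\star)\in\cL^0$, which are uniformly bounded in number by uniform discreteness of $\cL^0$. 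I would split this sum into a core part, where $\chi^\star$ lies in a large but fixed precompact set and uniform continuity of $\check h$ makes each difference small once $t^\star$ is small, and a tail part, where the rapid decay of $\check h$ from the previous paragraph makes the total contribution small uniformly in $s$. Given $\eps>0$ one thus obtains a neighborhood $U$ of $0$ in $\widehat H$ such that $t^\star \in U$ forces $\|T_t\widehat\omega - \widehat\omega\|_K < \eps$; since $\{t : (t,t^\star)\in\cL^0,\ t^\star\in U\}$ is a model set in the dual CPS with window of non-empty interior, it is relatively dense \cite{MOO}, giving a relatively dense set of $\eps$-norm almost periods.

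The main obstacle is the uniform control of the tail sum: one must ensure that $\widehat\omega$ is genuinely translation bounded and that the total tail contribution to $\|T_t\widehat\omega-\widehat\omega\|_K$ is small uniformly over all translates $s+K$, which amounts to the decay of $\check h$ dominating the growth of the number of points of $\cL^0$ in expanding regions of $\widehat G\times\widehat H$. In the general LCAG setting this is exactly where the metrisability and compact generation of $H$, together with a sufficiently large number of convolution factors in $h$, are needed; the remaining estimates are then a routine adaptation of the Fibonacci computations underlying Theorem~\ref{fib2}.
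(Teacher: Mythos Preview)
Your strategy is correct and matches the explicit construction sketched in the Remark immediately following this Proposition; the paper's own proof is merely a citation to \cite[Theorem~7.1]{NS20a}. The Remark differs only in that, after invoking the structure theorem for the compactly generated $H$, it takes $\varphi\in\Cc^\infty(\RR^d)\cap K_2(\RR^d)$ in the Euclidean factor rather than a high convolution power, which secures the rapid decay of $\check h$ more cleanly.

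Two small points to tighten. First, a generic nonnegative $\phi\in\Cc(H)$ is too weak for your purpose: $\check\phi$ is only guaranteed to lie in $C_0(\widehat H)$ and can decay more slowly than any inverse power, so no finite $n$ will force the summable tail you need for the estimate. You should either build $\phi$ from indicator functions of intervals in each non-compact direction (as in Definition~\ref{def: ping poing fib}, giving $\sinc$-type decay that improves with each convolution), or follow the Remark and take a smooth bump in the $\RR^d$ factor. Second, the clause ``uniformly bounded in number by uniform discreteness of $\cL^0$'' is not accurate as written, since $\pi_{\widehat G}(\cL^0)$ is dense in $\widehat G$ and there are infinitely many lattice points over any $s+K$. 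What your subsequent core/tail split correctly exploits is that $\cL^0$ is uniformly discrete in the product $\widehat G\times\widehat H$, so the count is bounded once you also constrain $\chi^\star$ to a fixed precompact slab in $\widehat H$ (compare Lemma~\ref{unif disc}).
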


\begin{remark} In Prop.~\ref{prop nap}, a norm almost periodic ping-pong measure can be constructed explicitly as follows:

Pick any compact set $W' \supseteq W$ with non-empty interior, and define $H':= \langle H' \rangle$ to be the subgroup of $H$ generated by $W'$. Then, $H'$ is compactly generated, and an open and closed subgroup of $H$. It follows that $H/H'$ is discrete.

Then, $(G,H', \cL')$ is a CPS, where
\[
\cL':= \{ (x,x^\star): x \in L, x^\star \in H' \} \,.
\]
Next, the structure theorem of compactly generated LCAG \cite{ReiSte}, implies that there exists some $d, n \geq 0$ and a compact group $\KK$ such that $H' \simeq \RR^d \times \ZZ^m \times \KK$. Denote by $\psi :H' \to \RR^d \times \ZZ^m \times \KK$ such an isomorphism. Then, by compactness, there exists some $R>0, N>0$ such that
\[
\psi(W) \subseteq [-R,R]^d \times [-N,N]^m \times \KK \,.
\]
Next, pick some $\varphi \in \Cc^\infty(\RR^d)\cap K_2(\RR^d)$, such that $\varphi(x)=1 \,,\, \forall x \in [-R,R]^d$, and define:
\[
\omega:= \sum_{(x,x^\star) \in \cL'} (\varphi \otimes 1_{[-N,N]^m} \otimes 1_{\KK})( \psi (x^\star)) \delta_x \,.
\]
Then, $\omega$ is a ping-pong measure for $\oplam(W)$, and $\widehat{\omega}$ is norm almost periodic \cite{NS20a}.
\end{remark}

Let us now give the following estimate for the norm almost periods of the diffraction, for any measure supported inside $\oplam(W)$.

\begin{proposition}\label{P1} Let $\oplam(Q)$ be any weak model set, and let $\omega$ be any ping-pong measure for $\oplam(W)$. Let $\eta$ be any positive definite measure supported inside $\oplam(W)$. Let $K \subseteq \mbox{\rm Int} (K_1)$ be any two compact sets in $\widehat{G}$. Then, for all $\chi \in \widehat{G}$, we have
\begin{displaymath}
\| T_\chi \widehat{\gamma}-\widehat{\gamma} \|_{K} \leq \| T_\chi \widehat{\omega} -\widehat{\omega} \|_{K_1} \gamma( \{ 0\}) \,.
\end{displaymath}
\end{proposition}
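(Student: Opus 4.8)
The plan is to reproduce, in the general cut-and-project setting, the argument used for the Fibonacci case in the proof of Theorem~\ref{fib 3}, with the interval pair $[0,1]\subseteq\mathrm{Int}([-\frac{1}{2},\frac{3}{2}])$ replaced by the abstract pair $K\subseteq\mathrm{Int}(K_1)$. Since $\gamma$ is positive definite, it is Fourier transformable and $\widehat{\gamma}\geq 0$; moreover $\supp(\gamma)\subseteq\oplam(W)$ is uniformly discrete, because the window $W$ is precompact and, in any case, $\supp(\omega)\supseteq\oplam(W)$ is $U$-uniformly discrete for the open set $U$ attached to the ping-pong measure $\omega$. The first step is to pick an auxiliary function $\varphi$ for $\omega$ whose support is small: choosing an open $V\ni 0$ with $V\cap\supp(\gamma)\subseteq\{0\}$ (possible by uniform discreteness), I take $\varphi\in\Cc(G)$ with $\varphi*\widetilde\varphi(0)=1$ and $\supp(\varphi*\widetilde\varphi)\subseteq U\cap V$; such $\varphi$ exists since auxiliary functions can be chosen with support in any neighbourhood of $0$. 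Setting $\nu:=\left|\widecheck\varphi\right|^2\,\widehat\gamma$, the (Ping) part of Theorem~\ref{ping pong theorem} shows that $\nu$ is a finite measure and that $\widehat\gamma=\widehat\omega*\nu$.

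Next I would compute the total mass of $\nu$. Because $\left|\widecheck\varphi\right|^2\geq 0$ and $\widehat\gamma\geq 0$, the measure $\nu$ is positive, so $|\nu|(\widehat G)=\nu(\widehat G)=\int_{\widehat G}\left|\widecheck\varphi\right|^2\,\dd\widehat\gamma$. By the defining property of the Fourier transform this equals $\int_G(\varphi*\widetilde\varphi)\,\dd\gamma=\sum_{x\in\supp(\gamma)}(\varphi*\widetilde\varphi)(x)\,\gamma(\{x\})$. The support condition $\supp(\varphi*\widetilde\varphi)\cap\supp(\gamma)\subseteq\{0\}$ kills every term except $x=0$, leaving $(\varphi*\widetilde\varphi)(0)\,\gamma(\{0\})=\gamma(\{0\})$. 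Hence $|\nu|(\widehat G)=\gamma(\{0\})$. If $0\notin\supp(\gamma)$, or if $\gamma(\{0\})=0$, this forces $\nu=0$ and therefore $\widehat\gamma=\widehat\omega*\nu=0$, so both sides of the asserted inequality vanish and there is nothing to prove.

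Finally, using $\widehat\gamma=\widehat\omega*\nu$ together with $T_\chi(\mu*\nu)=(T_\chi\mu)*\nu$, I write $T_\chi\widehat\gamma-\widehat\gamma=(T_\chi\widehat\omega-\widehat\omega)*\nu$. Since $T_\chi\widehat\omega-\widehat\omega$ is translation bounded and $\nu$ is finite, the convolution estimate \cite[Lemma~6.1]{NS20a}, which requires precisely $K\subseteq\mathrm{Int}(K_1)$, gives $\|(T_\chi\widehat\omega-\widehat\omega)*\nu\|_K\leq\|T_\chi\widehat\omega-\widehat\omega\|_{K_1}\,|\nu|(\widehat G)$. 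Combining this with the mass computation yields the claim. The genuinely routine parts are the ping-pong identity and the convolution inequality, both quoted from earlier results; the step needing the most care is the choice of auxiliary function guaranteeing $\supp(\varphi*\widetilde\varphi)\cap\supp(\gamma)\subseteq\{0\}$, since it is exactly the uniform discreteness of $\oplam(W)$ that collapses the integral $\int(\varphi*\widetilde\varphi)\,\dd\gamma$ to the single value $\gamma(\{0\})$ and thereby produces the density factor on the right-hand side.
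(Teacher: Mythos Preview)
Your proof is correct and follows essentially the same route as the paper's: apply the Ping identity $\widehat\gamma=\widehat\omega*\nu$ with $\nu=|\widecheck\varphi|^2\widehat\gamma$, use the convolution bound $\|(T_\chi\widehat\omega-\widehat\omega)*\nu\|_K\le\|T_\chi\widehat\omega-\widehat\omega\|_{K_1}|\nu|(\widehat G)$, and evaluate $|\nu|(\widehat G)=\gamma(\varphi*\widetilde\varphi)=\gamma(\{0\})$ via positivity and the support of $\varphi*\widetilde\varphi$. The only minor difference is that you go out of your way to shrink $\supp(\varphi*\widetilde\varphi)$ into an extra neighbourhood $V$ with $V\cap\supp(\gamma)\subseteq\{0\}$; the paper simply takes \emph{any} auxiliary function for $\omega$, since the defining condition $\supp(\varphi*\widetilde\varphi)\subseteq U$ together with the $U$-uniform discreteness of $\supp(\omega)\supseteq\oplam(W)\supseteq\supp(\gamma)$ already forces $\supp(\varphi*\widetilde\varphi)\cap\supp(\gamma)\subseteq\{0\}$ (using that $0\in\supp(\gamma)$ whenever $\gamma\ne 0$, by positive definiteness). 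Your additional care is harmless but unnecessary.
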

\begin{proof}
Let $\varphi$ be an auxiliary function, and as usual, set
\[
\nu= \left| \widecheck{\varphi} \right|^2 \, \widehat{\gamma} \,.
\]
Then, by the ping-pong lemma, we have
\begin{displaymath}
\| T_\chi \widehat{\gamma}-\widehat{\gamma} \|_{K}= \| \left(T_\chi \widehat{\omega}-\widehat{\omega}\right)* \nu \|_{K} \leq \| \left(T_\chi \widehat{\omega}-\widehat{\omega}\right) \|_{K_1} |\nu|(\widehat{G}) \,.
\end{displaymath}
Now, since $\gamma$ is positive definite, the measure $\nu$ is positive, and therefore we have
\[
|\nu|(\widehat{G})=\nu(\widehat{G})=\widehat{\gamma}(\left| \widecheck{\varphi} \right|^2)=\gamma(\varphi*\tilde{\varphi})= \gamma(\{0\}) \,,
\]
with the last equality following from the fact that $\varphi$ is an auxiliary function. This proves the claim.
\end{proof}

\medskip
\begin{tcolorbox}
For the remainder of this section, $(G,H, \cL)$ is a fixed CPS, $W \subseteq H$ and $K \subseteq \widehat{G}$ are fixed compact sets, and $\omega$ is a fixed ping-pong measure, such that, $\widehat{\omega}$ is norm almost periodic.

We fix another compact set $K_1 \subseteq \widehat{G}$, such that $K \subseteq \mbox{\rm Int}(K_1)$. For each $\eps >0$ we set
\[
P_\eps := \left\{ \chi \in \widehat{G} : \| T_\chi \widehat{\omega} - \widehat{\omega} \|_{K_1} < \eps \right\} \,.
\]
\end{tcolorbox}
Note that $P_\eps$ is relatively dense for all $\eps >0$, and only depends on the ping-pong measure $\omega$ and the choice of $K_1$.
\medskip

Combining all the results of this section, we get:

\begin{theorem}\label{NAP} Let
\[
\mu = \sum_{x \in \oplam(W)} c_x \delta_x
\]
be translation bounded, with autocorrelation $\gamma$, and set $c:= \sup \{ |c_x| :x \in \oplam(W) \} $. Then,
for all $\chi \in P_\eps$ and all $\alpha \in \{\mathsf{pp}, \mathsf{ac}, \mathsf{sc} \}$  we have
\[
\|T_\chi \widehat{\gamma}-\widehat{\gamma}\|_K \,,\,   \|T_\chi (\widehat{\gamma})_{\alpha}-(\widehat{\gamma})_{\alpha}\|_K \leq \eps \left(\gamma(\{0\})\right) \leq c^2 \eps \dens(\cL^0)  \theta_H(W) \,,
\]
where $\theta_H$ denotes the Haar measure on $H$.
\end{theorem}
\begin{proof}
Since $\supp(\gamma) \subseteq \oplam(W-W)$, by Theorem~\ref{existence geb} we have
\[
\supp((\gamma)_{\mathsf{s}}) \,,\, \supp((\gamma)_{0\mathsf{a}}) \,,\,\supp((\gamma)_{0\mathsf{s}}) \subseteq \oplam(W-W) \,.
\]
Therefore, by Proposition~\ref{P1} we get
\[
  \|T_\chi \widehat{\gamma}-\widehat{\gamma}\|_K \,,\,  \|T_\chi (\widehat{\gamma})_{\alpha}-(\widehat{\gamma})_{\alpha}\|_K\leq \eps \gamma(\{0\}) \,.
\]
Now, positive definiteness gives
\begin{displaymath}
0 \leq  \gamma_{\mathsf{s}}(\{0\}), \gamma_{0\mathsf{a}}(\{0\}), \gamma_{0\mathsf{s}}(\{0\}) \leq \gamma_{\mathsf{s}}(\{0\}) + \gamma_{0\mathsf{a}}(\{0\}) + \gamma_{0\mathsf{s}}(\{0\}) = \gamma(\{0\}) \,.
\end{displaymath}
To complete the proof, we observe that
\begin{align*}
  \gamma(\{0\}) & =\lim_n \frac{\sum_{x \in (\Lambda \cap A_n)} c_x \overline{c_x} }{|A_n|} =\lim_n \frac{\sum_{x \in (\Lambda \cap A_n)} |c_x|^2 }{|A_n|} \\
   &\leq c^2 \udens_{\cA}(\oplam(W))  \leq c^2 \dens(\cL^0) \theta_H(W) \,.
\end{align*}
\end{proof}

As an immediate consequence, we get a common relatively dense set of $\eps$-almost periods for the diffraction of all subsets $\Lambda \subseteq \oplam(W)$:

\begin{theorem}\label{NAPMS} Let $\Lambda \subseteq \oplam(W)$, let $\eps>0$, and let $\gamma$ be an autocorrelation of $\Lambda$. Set $\eps':=\frac{\eps}{\dens(\cL^0)\theta_H(W)+1}$. Then,
for all $\chi \in P_{\eps'}$ we have
\[
\max \{
  \|T_\chi \widehat{\gamma}-\widehat{\gamma}\|_K \,,\,
  \|T_\chi (\widehat{\gamma})_{\mathsf{pp}}-(\widehat{\gamma})_{\mathsf{pp}}\|_K \,,\,
  \|T_\chi (\widehat{\gamma})_{\mathsf{ac}}-(\widehat{\gamma})_{\mathsf{ac}}\|_K \,,\,
  \|T_\chi (\widehat{\gamma})_{\mathsf{sc}}-(\widehat{\gamma})_{\mathsf{sc}}\|_K \} \leq \eps \,.
\]\qed
\end{theorem}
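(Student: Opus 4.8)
The plan is to deduce this directly from Theorem~\ref{NAP}, the only new ingredient being a rescaling of $\eps$. First I would observe that a subset $\Lambda \subseteq \oplam(W)$ is exactly the support of the unweighted Dirac comb $\delta_\Lambda = \sum_{x \in \Lambda} \delta_x$, i.e.\ the special case $c_x = 1$ of the weighted comb treated in Theorem~\ref{NAP}. Hence $c = \sup\{|c_x| : x \in \oplam(W)\} = 1$, and all four inequalities of Theorem~\ref{NAP} hold with $c^2 = 1$: for every $\eps' > 0$ and every $\chi \in P_{\eps'}$ one has $\|T_\chi \widehat{\gamma} - \widehat{\gamma}\|_K \leq \eps'\,\dens(\cL^0)\,\theta_H(W)$, and likewise for the $\mathsf{pp}$, $\mathsf{ac}$ and $\mathsf{sc}$ components.

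The key step is then to choose $\eps'$ so that this upper bound collapses to $\eps$. I would set
$$
\eps' := \frac{\eps}{\dens(\cL^0)\,\theta_H(W)+1}
$$
and apply the above with this $\eps'$. For $\chi \in P_{\eps'}$ each of the four norms is then bounded by
$$
\eps'\,\dens(\cL^0)\,\theta_H(W) = \frac{\eps\,\dens(\cL^0)\,\theta_H(W)}{\dens(\cL^0)\,\theta_H(W)+1} \leq \eps ,
$$
since the fraction $\dens(\cL^0)\theta_H(W)\big/\bigl(\dens(\cL^0)\theta_H(W)+1\bigr)$ is always strictly less than $1$. As $P_{\eps'}$ is precisely the set $P_{\eps/(\dens(\cL^0)\theta_H(W)+1)}$ appearing in the statement, the four displayed inequalities follow at once.

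There is no genuine obstacle here: the substantive work --- passing through the ping--pong Lemma, the generalized Eberlein decomposition, and the density estimate $\gamma(\{0\}) \leq c^2\,\dens(\cL^0)\,\theta_H(W)$ --- has already been carried out in Theorem~\ref{NAP}. The only points requiring care are (i) the harmless $+1$ in the denominator, which guarantees the bound is genuinely $\leq \eps$ regardless of the size of $\dens(\cL^0)\theta_H(W)$ (and also sidesteps any degenerate vanishing of the window measure), and (ii) recording that the resulting almost-period set is relatively dense. The latter is immediate, since $P_{\eps'}$ is relatively dense for every $\eps' > 0$, as noted right after its definition and owing to the norm almost periodicity of $\widehat{\omega}$. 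The conclusion is thus a single relatively dense set of common $\eps$--$\|\cdot\|_K$ almost periods for $\widehat{\gamma}$ and each of its three spectral components, simultaneously for all $\Lambda \subseteq \oplam(W)$.
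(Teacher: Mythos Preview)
Your proof is correct and is exactly the argument the paper has in mind: the theorem is stated there as an immediate consequence of Theorem~\ref{NAP}, and your rescaling $\eps' = \eps/(\dens(\cL^0)\theta_H(W)+1)$ together with the specialization $c_x \in \{0,1\}$, $c=1$, is precisely what is needed. Your remarks on the role of the $+1$ and on the relative denseness of $P_{\eps'}$ are also on point.
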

Similarly, given any gamily of equi-translation bounded measures supported inside $\oplam(W)$, we can construct a common relatively dense set of $\eps$-almost periods for the diffraction of all the measures in the family.

\begin{theorem}\label{NAPMSwc} For each $\eps>0$ and each $\alpha >0$, there exists a relatively dense set $Q=Q(\eps,\alpha,\oplam(W)) \subseteq \widehat{G}$ with the following property :

\noindent Let $\mu = \sum_{x \in \oplam(W)} c_x \delta_x$ be a measure with $\sup \{ |c_x| \}  \leq \alpha$, and let $\gamma$ be an autocorrelation of $\mu$. Then, for all $\chi \in Q$ we have
\[
\max \{
  \|T_\chi \widehat{\gamma}-\widehat{\gamma}\|_K \,,\,
  \|T_\chi (\widehat{\gamma})_{\mathsf{pp}}-(\widehat{\gamma})_{\mathsf{pp}}\|_K \,,\,
  \|T_\chi (\widehat{\gamma})_{\mathsf{ac}}-(\widehat{\gamma})_{\mathsf{ac}}\|_K \,,\,
  \|T_\chi (\widehat{\gamma})_{\mathsf{sc}}-(\widehat{\gamma})_{\mathsf{sc}}\|_K \} \leq \eps \,.
\]

\end{theorem}
\begin{proof}
Let $\eps ':= \frac{\eps}{\alpha^2 \dens(\cL^0) \theta_H(W)+1}$, and set $Q:= P_{\eps'}$. The claim follows from Theorem~\ref{NAP}.
\end{proof}
As an immediate consequence, we get:

\begin{corollary}\cite[Theorem~7.1 and Corollary~7.2]{NS20a} Let $\mu$ be any measure with weak Meyer set support and let $\gamma$ be an autocorrelation of $\mu$. Then $\widehat{\gamma}, (\widehat{\gamma})_{\mathsf{\mathsf{pp}}},(\widehat{\gamma})_{\mathsf{\mathsf{ac}}}$ and $(\widehat{\gamma})_{\mathsf{\mathsf{sc}}}$ are norm almost periodic.\qed
\end{corollary}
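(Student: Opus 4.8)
The plan is to reduce the statement to the quantitative estimates of Theorem~\ref{NAP}, by choosing a convenient cut-and-project scheme and a norm almost periodic ping-pong measure, and then to read off relative denseness of the almost periods directly from the norm almost periodicity of $\widehat{\omega}$.

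First I would fix the data. Since $\mu$ has weak Meyer set support, its support $\Lambda=\supp(\mu)$ is a subset of a model set, so there exist a CPS $(G,H,\cL)$ and a compact window $W\subseteq H$ with $\Lambda\subseteq\oplam(W)$; as $\Lambda$ is uniformly discrete we may write $\mu=\sum_{x\in\Lambda}c_x\delta_x$ with bounded weights $\{c_x\}$, this boundedness being exactly what guarantees that the autocorrelation $\gamma$ (with respect to some van Hove sequence, after passing to a subsequence if needed) exists and is Fourier transformable. Because $\supp(\gamma)\subseteq\Lambda-\Lambda\subseteq\oplam(W)-\oplam(W)\subseteq\oplam(W-W)$, I would replace $W$ by the compact set $W':=W-W$, so that $\supp(\gamma)\subseteq\oplam(W')$ and the autocorrelation is now supported in the window of the scheme I intend to use.

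Next, by Proposition~\ref{prop nap} I would select a ping-pong measure $\omega$ for $\oplam(W')$ whose Fourier transform $\widehat{\omega}$ is norm almost periodic, fix a compact $K\subseteq\widehat{G}$ with non-empty interior, and choose a compact $K_1$ with $K\subseteq\mathrm{Int}(K_1)$. By Theorem~\ref{existence geb} the three Eberlein components $\gamma_{\mathsf{s}},\gamma_{0\mathsf{a}},\gamma_{0\mathsf{s}}$ are Fourier transformable, positive definite and supported inside $\oplam(W')$, with Fourier transforms $(\widehat{\gamma})_{\mathsf{pp}},(\widehat{\gamma})_{\mathsf{ac}},(\widehat{\gamma})_{\mathsf{sc}}$ respectively, while $\gamma$ itself is positive definite. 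Applying Proposition~\ref{P1} to each of these four positive definite measures and using
\[
0\le\gamma_{\mathsf{s}}(\{0\}),\ \gamma_{0\mathsf{a}}(\{0\}),\ \gamma_{0\mathsf{s}}(\{0\})\le\gamma(\{0\}),
\]
I obtain, for every $\chi\in\widehat{G}$, that each of the four norms $\|T_\chi\widehat{\gamma}-\widehat{\gamma}\|_K$, $\|T_\chi(\widehat{\gamma})_{\mathsf{pp}}-(\widehat{\gamma})_{\mathsf{pp}}\|_K$, $\|T_\chi(\widehat{\gamma})_{\mathsf{ac}}-(\widehat{\gamma})_{\mathsf{ac}}\|_K$ and $\|T_\chi(\widehat{\gamma})_{\mathsf{sc}}-(\widehat{\gamma})_{\mathsf{sc}}\|_K$ is bounded above by $\|T_\chi\widehat{\omega}-\widehat{\omega}\|_{K_1}\,\gamma(\{0\})$.

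Finally I would conclude. Given $\eps>0$, set $\delta:=\eps/(\gamma(\{0\})+1)$; since $\widehat{\omega}$ is norm almost periodic, the set $P_\delta=\{\chi:\|T_\chi\widehat{\omega}-\widehat{\omega}\|_{K_1}<\delta\}$ is relatively dense, and the bound above shows that every $\chi\in P_\delta$ is simultaneously an $\eps$-$\|\cdot\|_K$ almost period of $\widehat{\gamma}$ and of each of its three spectral components. Hence the corresponding sets of $\eps$-almost periods are relatively dense for every $\eps>0$, which is precisely norm almost periodicity (the particular compact $K$ with non-empty interior being immaterial, since all such sets define equivalent norms on $\cM^\infty(\widehat{G})$). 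I expect no serious obstacle beyond this bookkeeping: the genuine content is already packaged in Proposition~\ref{prop nap}, Theorem~\ref{existence geb} and Proposition~\ref{P1}. The only points requiring a little care are the passage from $W$ to $W-W$, ensuring the ping-pong measure is chosen for the window actually containing $\supp(\gamma)$, and the degenerate case $\gamma(\{0\})=0$, in which $\gamma$ (being positive definite) vanishes and all four measures are trivially norm almost periodic; the additive $+1$ in $\delta$ absorbs this case uniformly.
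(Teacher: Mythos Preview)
Your proof is correct and follows essentially the same route as the paper: the Corollary is stated there as ``an immediate consequence of the results in this section'', and your argument simply unpacks this by combining Proposition~\ref{prop nap}, Theorem~\ref{existence geb} and Proposition~\ref{P1} exactly as in the proof of Theorem~\ref{NAP}. The only difference is cosmetic --- you make explicit the replacement of $W$ by $W-W$ and the degenerate case $\gamma(\{0\})=0$ --- but the underlying mechanism is identical.
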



\section{Diffraction of weak Meyer sets and subsets of lattices}\label{sect:diss}

Let us briefly compare the diffraction properties of subsets of model sets, to those for subsets of lattices. Let us recall first the following result, which is a particular case of \cite[Theorem~1]{Ba}.

\begin{theorem}\label{thm:1}\cite{Ba} Let $L$ be a lattice in $\RR^d$ and let $S \subseteq L$ be arbitrary. Let $\gamma$ be any autocorrelation of $S$. Then, there exists a finite positive measure $\nu$ such that
  \[
  \widehat{\gamma}=\delta_{L^0}*\nu \,.
  \]\qed
\end{theorem}

As mentioned above, the (Ping) part of ping-pong lemma is the result for subsets of model sets similar to Theorem~\ref{thm:1}.

\smallskip

Let us note here that Theorem~\ref{thm:1} implies that the set of Bragg peaks of $S$ of the highest intensity $I=\widehat{\gamma}(\{0\})$ contains the dual lattice $L^0$. Similarly, Theorem~\ref{thm bragg meyer set} shows that, for subsets $\Lambda$ of model sets, the set of Bragg peaks of $\Lambda$ of the intensity almost equal to $I=\widehat{\gamma}(\{0\})$ contains the dual model set $B_\eps=\hoplam(W_\eps)$.

Furthermore, Theorem~\ref{thm:1} gives that the absolutely continuous, and singular continuous components of diffraction spectrum of $S$, are $L^0$-periodic. Theorem~\ref{NAPMSwc} gives that the absolutely continuous and singular continuous components of diffraction spectrum of weak Meyer sets are almost periodic, and that the set of almost periods contain model sets in the dual CPS, with small neighbourhoods of zero as windows.

The similarity, and relationship, between the diffraction of subsets of lattices and subsets of model sets becomes more clear in the case of CPS with compact internal space $H$. Indeed, in this case, $L= \oplam(H)$ is a lattice containing our weak Meyer set $\Lambda$, and $\hoplam(W_\eps)$ contains the model set $\hoplam(\{ 0\})=L^0$.

\section{FCDM functions}\label{sect SCF}

In this section, we look at potential decompositions of the diffraction of Meyer sets, which are similar to the existence of the (generalized) Eberlein decomposition. Reading carefully the proof of Thm.~\ref{existence geb}, one can see that it only relies on the fact that the Lebesgue decomposition satisfies two simple properties, which we list in the following definition.

\begin{definition}\label{def FCDM} A function $P : \cM^\infty(G) \to \cM^\infty(G)$ is said to be \textbf{a function which factors through convolution with discrete measures}, or simply a \textbf{FCDM} function, if it satisfies the following two conditions:
\begin{itemize}
  \item{} If $\nu$ is finite, then $P(\nu)$ is finite.
  \item{} For all pure point measures $\omega \in \cM^\infty(G)$, and all finite measures $\nu$, we have
  \[
P(\omega*\nu) = \omega* P(\nu) \,.
  \]
\end{itemize}
\end{definition}

Note here that, when $\omega$ is translation bounded, and $\nu$ is finite, both convolutions $\omega*\nu$ and $\omega* P(\nu)$ are well defined, and translation bounded \cite{ARMA1,MoSt}.

\begin{example} The projections $P_{\mathsf{pp}}, P_{\mathsf{ac}}, P_{\mathsf{sc}}$ on the pure point, absolutely continuous and singular continuous components, respectively, are FCDM functions.
\end{example}

\smallskip

The following lemma is obvious.
\begin{lemma}
\begin{itemize}
  \item[(a)] The identity map $\mbox{Id} : \cM^\infty(G) \to \cM^\infty(G)$ is a FCDM function.
  \item[(b)] If $P,Q : \cM^\infty(G) \to \cM^\infty(G)$ are FCDM function, and $C_1,C_2 \in \CC$, then $C_1P+C_2Q$ is a  FCDM function.
\end{itemize}\qed
\end{lemma}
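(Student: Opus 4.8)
The plan is to verify, for each of the two maps, the two defining conditions of Definition~\ref{def FCDM}; in both cases the verification reduces to the vector-space structure of the finite measures and to the linearity of convolution.

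For part (a), the first condition is immediate: if $\nu$ is finite then $\mbox{Id}(\nu)=\nu$ is finite. For the second condition I would fix a pure point $\omega \in \cM^\infty(G)$ and a finite measure $\nu$ and simply observe that
\[
\mbox{Id}(\omega*\nu) = \omega*\nu = \omega*\mbox{Id}(\nu) \,,
\]
so \eqref{FCDM def} holds tautologically.

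For part (b), write $R:=C_1P+C_2Q$, understood pointwise via $R(\mu)=C_1P(\mu)+C_2Q(\mu)$. First I would note that $R$ is a well-defined map $\cM^\infty(G)\to\cM^\infty(G)$, since $\cM^\infty(G)$ is a complex vector space and $P(\mu),Q(\mu)\in\cM^\infty(G)$ for all $\mu$. For the first FCDM condition, if $\nu$ is finite then $P(\nu)$ and $Q(\nu)$ are finite by the hypothesis on $P$ and $Q$, and since the finite measures form a vector space, $R(\nu)=C_1P(\nu)+C_2Q(\nu)$ is finite. For the second condition I would fix a pure point $\omega\in\cM^\infty(G)$ and a finite $\nu$, and then combine the FCDM property of $P$ and of $Q$ with the linearity of convolution by the fixed measure $\omega$:
\begin{align*}
R(\omega*\nu) &= C_1P(\omega*\nu)+C_2Q(\omega*\nu) = C_1\bigl(\omega*P(\nu)\bigr)+C_2\bigl(\omega*Q(\nu)\bigr) \\
&= \omega*\bigl(C_1P(\nu)+C_2Q(\nu)\bigr) = \omega*R(\nu) \,,
\end{align*}
which is exactly \eqref{FCDM def}.

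The only step deserving any comment is the bilinearity of convolution used in the last display, namely that $\mu\mapsto\omega*\mu$ is linear on finite measures for a fixed translation bounded $\omega$; this is the standard distributivity of convolution, well-defined here because $\omega*\mu$ and $\omega*P(\nu)$ are translation bounded whenever $\omega$ is pure point translation bounded and the second factor is finite. There is no genuine obstacle, which is why the statement is labelled obvious.
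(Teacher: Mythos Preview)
Your proposal is correct and matches the paper's treatment: the paper simply declares the lemma obvious and gives no proof, and your verification of the two defining conditions is exactly the routine check one would write out if asked to.
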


This means that every FCDM function $P$, gives a ``canonical" decomposition for each $\mu \in \cM^\infty(G)$, as
\[
\mu= P(\mu)+Q(\mu) \,,
\]
where $Q=\mbox{Id}-P$ is also a FCDM function.
\smallskip

\begin{remark} Let $P : \cM^\infty(G) \to \cM^\infty(G)$ be any FCDM function. Then,
\begin{itemize}
  \item[(a)] For each finite measure $\mu$ and $C \in \CC$, we have
\[
P(C \mu) = P((C\delta_0)*\mu)=(C \delta_0)*P(\mu)=CP(\mu) \,.
\]
  \item[(b)] For each finite measure $\mu$ and $t \in G$, we have
\[
P(T_t \mu)=P(\delta_t*\mu)= \delta_t*P(\mu)=T_t P(\mu) \,.
\]
In particular, if $\tau$ is a topology on $\cM^\infty(G)$, with the property that the set of finite measures is dense in $\cM^\infty(G)$, and $P$ is continuous with respect to $\tau$, then $P$ commutes with the translation operators $T_t$.

\item[(c)] If $P : \cM^\infty(G) \to \cM^\infty(G)$ is linear, and commutes with the translate operators $T_t$, then, for all finite pure point measure $\omega$, and for all finite measures $\mu$, we have $P(\omega*\mu)=\omega*P(\mu)$.

    In particular, if $\tau$ is any topology on $\cM^\infty(G)$ with the property that, for all pure point measures $\omega$, and all finite measures $\mu$, we have
\[
\omega*\mu =\lim_{\substack{F \subseteq G \\ F \mbox{ \rm is finite}}} (\sum_{x \in F} \omega(\{x\})\delta_x) * \mu \,,
\]
and $P : (\cM^\infty(G) , \tau) \to (\cM^\infty(G), \tau)$ is a continuous linear function, then $P$ is a FCDM function if and only if $P$ commutes with the translation operators.
\end{itemize}
\end{remark}

We can now prove the following result, which is a generalisation of the existence of the general Eberlein decomposition for measures with Meyer set support.

\begin{theorem}\label{FCDM} Let $(G,H,\cL)$ be a CPS, let $W \subseteq H$ be any compact set, and let $\omega$ be a ping-pong measure for $\oplam(W)$. Let $P: \cM^\infty(\widehat{G}) \to \cM^\infty(\widehat{G})$ be any FCDM function.

Then, for each Fourier transformable measure $\gamma$ with $\supp(\gamma) \subseteq \oplam(W)$, there exists a (unique) Fourier transformable measure $\gamma_{P}$, and some function $f_P \in B(G)$, such that
\begin{itemize}
  \item[(a)] $\displaystyle \widehat{\gamma_{P}}=P( \widehat{\gamma})$.
  \item[(b)]$ \displaystyle \gamma_{P} = (f_P) \omega$.
  \item[(c)] $\supp(\gamma_{P}) \subseteq \oplam(W)$.
\end{itemize}
\end{theorem}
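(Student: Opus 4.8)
The plan is to transport the FCDM function $P$ from the diffraction side back to the measure side using the two directions of the ping-pong Lemma (Theorem~\ref{ping pong theorem}). First I would fix an auxiliary function $\varphi$ for $\omega$ and set $\nu := |\widecheck{\varphi}|^2\, \widehat{\gamma}$. By the (Ping) implication, $\nu$ is a finite measure on $\widehat{G}$ and $\widehat{\gamma} = \widehat{\omega}*\nu$. Since $\omega$ is a ping-pong measure, $\widehat{\omega}$ is a pure point (translation bounded) measure, and since $P$ is a FCDM function and $\nu$ is finite, $P(\nu)$ is finite; the defining identity \eqref{FCDM def} then yields
$$
P(\widehat{\gamma}) = P(\widehat{\omega}*\nu) = \widehat{\omega}*P(\nu) \,.
$$

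Next I would feed the finite measure $\nu_P := P(\nu)$ into the (Pong) implication. Setting $f_P := \widecheck{\nu_P}$, which lies in $B(G)$ as the inverse Fourier transform of a finite measure, the (Pong) part gives that $\gamma_P := f_P\,\omega = \widecheck{\nu_P}\,\omega$ is Fourier transformable with
$$
\widehat{\gamma_P} = \widehat{\omega}*\nu_P = \widehat{\omega}*P(\nu) = P(\widehat{\gamma}) \,,
$$
and with $\supp(\gamma_P) \subseteq \supp(\omega)$. This establishes (a) and (b) simultaneously. Uniqueness of $\gamma_P$ — and, crucially, its independence of the chosen $\omega$ and $\varphi$ — follows from injectivity of the Fourier transform \cite[Theorem~4.9.13]{MoSt}: any Fourier transformable measure whose transform equals $P(\widehat{\gamma})$ must coincide with $\gamma_P$.

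The main obstacle is part (c), where I must sharpen $\supp(\gamma_P) \subseteq \supp(\omega) \subseteq L$ to $\supp(\gamma_P) \subseteq \oplam(W)$, since $\supp(\omega)$ may be strictly larger than $\oplam(W)$. Here I would exploit the freedom to vary the ping-pong measure, following the argument preceding Theorem~\ref{existence geb}. Fix any $x \in L\setminus\oplam(W)$; then $x^\star \in H\setminus W$, so Proposition~\ref{existence ping pong}, applied with $y = x^\star$, produces a ping-pong measure $\omega'$ for $\oplam(W)$ with $\omega'(\{x\}) = 0$. Repeating the first two steps with $\omega'$ in place of $\omega$ yields a Fourier transformable measure with transform $P(\widehat{\gamma})$, which by the uniqueness above is again $\gamma_P$; but in this representation $\gamma_P = f_P'\,\omega'$ for some $f_P' \in B(G)$, whence $\gamma_P(\{x\}) = f_P'(x)\,\omega'(\{x\}) = 0$. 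Since $\gamma_P$ is a pure point measure supported in the uniformly discrete set $\supp(\omega)\subseteq L$ and vanishes at every point of $L\setminus\oplam(W)$, we conclude $\supp(\gamma_P)\subseteq\oplam(W)$, proving (c).
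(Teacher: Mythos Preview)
Your proposal is correct and follows essentially the same route as the paper: apply (Ping) to write $\widehat{\gamma}=\widehat{\omega}*\nu$, use the FCDM identity with the pure point measure $\widehat{\omega}$ to get $P(\widehat{\gamma})=\widehat{\omega}*P(\nu)$, then apply (Pong) to $P(\nu)$ to produce $\gamma_P=\widecheck{P(\nu)}\,\omega$, and finally vary the ping-pong measure via Proposition~\ref{existence ping pong} to kill points of $L\setminus\oplam(W)$. Your explicit invocation of injectivity of the Fourier transform to justify that $\gamma_P$ is independent of the chosen $\omega$ is a nice touch that the paper leaves implicit.
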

\begin{proof}
By the ping-pong lemma (Theorem~\ref{ping pong theorem}), there exists a finite measure $\nu$ such that
\begin{displaymath}
\widehat{\gamma}= \widehat{\omega}* \nu \,.
\end{displaymath}
Since $\nu$ is a finite measure, it follows from the definition of FCDM function that $P(\nu)$ is finite. Since $\widehat{\omega}$ is translation bounded \cite{MoSt}, the measures $\widehat{\omega}$ and $P(\nu)$ are convolvable, and, by the definition of FCDM we have
\begin{displaymath}
\widehat{\omega}* P(\nu) =P(\widehat{\omega}*\nu)= P(\widehat{\gamma})  \,.
\end{displaymath}
Finally, as $P(\nu)$ is a finite measure, by Theorem~\ref{ping pong theorem}, the measure
\[
\gamma_{P}:= \widecheck{P(\nu)} \omega
\]
is supported inside $\supp(\omega)$, Fourier transformable, and
\begin{displaymath}
\widehat{\gamma_P}= \widehat{\omega}* P(\nu)= P(\widehat{\gamma}) \,.
\end{displaymath}
Setting $f_P:= \widecheck{P(\nu)}$ we get (a) and (b).

To show (c), pick $z \in \pi^G(\cL) \backslash \oplam(W)$. By Proposition~\ref{P1} (a), there exists some $h_1 \in K_2(G)$ such that $\omega_1=\omega_{h_1}$ is a ping-pong measure for $\oplam_1(W)$ and $\omega(\{z \})=0$. Repeating the proof above, with $\omega$ replaced by $\omega_1$, we get that $\gamma_P(\{ z \})=0$. This proves (c).
\end{proof}

\begin{remark} Let $(G,H,\cL)$ be a CPS, and $W \subseteq H$ be any compact set. Denote by
\[
{\mathcal FT}_{W}(G):= \{ \gamma: \gamma \mbox{ is Fourier transformable, and } \supp(\gamma) \subseteq \oplam(W) \} \,.
\]
Then, Theorem~\ref{FCDM} says that every FCDM function $P: \cM^\infty(\widehat{G}) \to \cM^\infty(\widehat{G})$ induces a unique function
\[
R: {\mathcal FT}_{W}(G) \to {\mathcal FT}_{W}(G)
\]
such that, for all $\gamma \in {\mathcal FT}_{W}(G)$ we have
\[
\widehat{R(\gamma)}=P(\widehat{\gamma}) \,.
\]
\end{remark}

Combining these results with \cite[Theorem~7.1]{NS20a} we get:

\begin{corollary} Let $\gamma$ be a Fourier transformable measure supported inside a Meyer set, and let $P: \cM^\infty(\widehat{G}) \to \cM^\infty(\widehat{G})$ FCDM function. Then, $P( \widehat{\gamma})$  is norm almost periodic. In particular, either $P( \widehat{\gamma})=0$, or $P( \widehat{\gamma})$ has relatively dense support.
\end{corollary}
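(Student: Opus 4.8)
The plan is to reduce the statement to the single fact that convolving a norm almost periodic measure with a finite measure preserves norm almost periodicity, after which the dichotomy follows from the standard behaviour of nonzero norm almost periodic measures. The two inputs are Theorem~\ref{FCDM} and the existence of a ping-pong measure whose transform is norm almost periodic (Proposition~\ref{prop nap}, i.e. \cite[Theorem~7.1]{NS20a}). First I would fix a CPS and a compact window: since $\gamma$ is supported inside a Meyer set, and a Meyer set is a relatively dense subset of a model set $\oplam(W_0)$, we have $\supp(\gamma) \subseteq \oplam(W_0) \subseteq \oplam(\overline{W_0})$, so I may take $W := \overline{W_0}$, which is compact. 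As $G$ is second countable throughout, Proposition~\ref{prop nap} supplies a ping-pong measure $\omega$ for $\oplam(W)$ for which $\widehat{\omega}$ is norm almost periodic; I fix such an $\omega$ together with an auxiliary function $\varphi$.

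Next I would factor $P(\widehat{\gamma})$ through $\widehat{\omega}$. By the (Ping) part of Theorem~\ref{ping pong theorem}, the measure $\nu := \left| \widecheck{\varphi} \right|^2 \widehat{\gamma}$ is finite and $\widehat{\gamma} = \widehat{\omega} * \nu$. Since $\widehat{\omega}$ is a pure point, translation bounded measure and $\nu$ is finite, the defining property of an FCDM function gives $P(\widehat{\gamma}) = P(\widehat{\omega} * \nu) = \widehat{\omega} * P(\nu)$, with $P(\nu)$ again finite; this is exactly part (a) of Theorem~\ref{FCDM} read off its proof.

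The only substantive step is then the convolution estimate. Picking compact sets $K \subseteq \mathrm{Int}(K_1)$ in $\widehat{G}$ and using $T_\chi(\widehat{\omega}*P(\nu)) - \widehat{\omega}*P(\nu) = (T_\chi\widehat{\omega}-\widehat{\omega})*P(\nu)$ together with \cite[Lemma~6.1]{NS20a}, exactly as in Proposition~\ref{P1}, I obtain
\[
\|T_\chi P(\widehat{\gamma}) - P(\widehat{\gamma})\|_K \leq \|T_\chi \widehat{\omega} - \widehat{\omega}\|_{K_1} \, |P(\nu)|(\widehat{G}) \,.
\]
For any $\eps>0$ the norm almost periodicity of $\widehat{\omega}$ makes the set $\{ \chi : \|T_\chi\widehat{\omega}-\widehat{\omega}\|_{K_1} < \eps/(|P(\nu)|(\widehat{G})+1) \}$ relatively dense, and by the displayed inequality every such $\chi$ is an $\eps$-$\|\cdot\|_K$ almost period of $P(\widehat{\gamma})$; since $K$ is an arbitrary compact set with non-empty interior, this shows $P(\widehat{\gamma})$ is norm almost periodic.

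Finally, for the dichotomy I would invoke the standard fact that a nonzero norm almost periodic measure has relatively dense support: if $P(\widehat{\gamma}) \neq 0$, I choose $x_0$ with $|P(\widehat{\gamma})|(x_0+K) =: c > 0$; every $\tfrac{c}{2}$-almost period $\chi$ then satisfies $|P(\widehat{\gamma})|(x_0-\chi+K) > \tfrac{c}{2}$, so $\supp(P(\widehat{\gamma}))$ meets $x_0-\chi+K$. As $\chi$ ranges over the relatively dense set of such almost periods, the relatively dense family of points $x_0-\chi$ all lie in $\supp(P(\widehat{\gamma})) - K$, forcing $\supp(P(\widehat{\gamma}))$ itself to be relatively dense. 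The only place needing care is the convolution bound, but this is already packaged in Proposition~\ref{P1} and \cite[Lemma~6.1]{NS20a}, so no genuine obstacle remains beyond bookkeeping.
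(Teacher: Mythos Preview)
Your proposal is correct and follows essentially the same approach as the paper, which merely states that the corollary is obtained by combining Theorem~\ref{FCDM} with \cite[Theorem~7.1]{NS20a}; you have simply unpacked that combination explicitly, writing $P(\widehat{\gamma})=\widehat{\omega}*P(\nu)$ via the ping-pong Lemma and the FCDM property, then invoking the norm almost periodicity of $\widehat{\omega}$ together with the convolution bound from \cite[Lemma~6.1]{NS20a}, and finally spelling out the standard relatively-dense-support dichotomy.
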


\medskip

\subsection{An application: Hausdorff dimension decomposition}

In this section we will denote by $\dim_{H}$ the Hausdorff dimension on $\RR$. We will often employ the well known formula
\begin{equation}\label{eq:dh}
\dim_{H} (\bigcup_{n \in \NN } S_n) = \sup_{n \in \NN} \dim_{H}(S_n) \,,
\end{equation}
for any countable family $ \{ S_n \}$ of subsets of $\RR$.

Let us recall and extend the following definitions of \cite{Last}.

\begin{definition} Let $\mu$ be a positive measure on $\RR$.
\begin{itemize}
  \item[(a)] $\mu$ is called \textbf{zero-dimensional} if it is supported on a Borel set $S$ of Hausdorff dimension $\dim_{H}(S)=0$.
  \item[(b)] $\mu$ is called \textbf{positive-dimensional} if $\mu(S)=0$ for all Borel sets $S$ with $\dim_{H}(S)=0$.
\end{itemize}
A measure $\nu$ on $\RR$ is called \textbf{zero-dimensional}, or \textbf{positive-dimensional}, respectively, if its total variation measure $|\mu|$ is zero-dimensional, or positive-dimensional, respectively.
\end{definition}

The following result follows immediately from the definitions.

\begin{lemma}\label{lem2} Let $\mu$ be a measure, and consider the canonical decomposition
\[
\mu=\mbox{Re}(\mu)+i \mbox{Im}(\mu)= [(\mbox{Re}(\mu))_+- (\mbox{Re}(\mu))_-]+i[(\mbox{Im}(\mu))_+- (\mbox{Im}(\mu))_-] \,,
\]
given by the Hahn decomposition of $\mbox{Re}(\mu)$ and $\mbox{Im}(\mu)$. Then, $\mu$ is zero-dimensional or positive-dimensional, respectively, if and only if  each of
the four components is zero-dimensional, or positive-dimensional, respectively.
\end{lemma}
\begin{proof}
The proof is an immediate consequence of the fact that $\supp(\mu)$ is the union of the supports of the four components, and that
and
\begin{align*}
&(\mbox{Re}(\mu))_+ ; (\mbox{Re}(\mu))_- ; (\mbox{Im}(\mu))_+ ;((\mbox{Im}(\mu))_-) \leq |\mu| \\
&\leq \mbox{Re}(\mu))_+ + (\mbox{Re}(\mu))_- + (\mbox{Im}(\mu))_+ + (\mbox{Im}(\mu))_- \,.
\end{align*}
\end{proof}

By combining the definition with \eqref{eq:dh}, we immediately get:
\begin{fact}\label{fact2} Let $\mu$ be a positive measure on $\RR$. Then, $\mu$ is zero-dimensional, or positive-dimensional, respectively, if and only if, for all $n$, the restriction $\mu|_{[n, n+1)}$, is  zero-dimensional, or positive-dimensional, respectively.
\end{fact}

This fact, combined with \cite[Cor.~4.1.4(a)]{Last} gives:
\begin{proposition} Any measure $\mu$ on $\RR$ has a unique decomposition
\[
\mu=\mu_{zd}+\mu_{pd} \,,
\]
where $\mu_{zd}$ is zero-dimensional, and $\mu_{pd}$ is positive-dimensional. Moreover, the mappings $P_{pd}(\mu):= \mu_{pd}$, and $P_{zd}(\mu):= \mu_{zd}$ are positive.
\end{proposition}
\begin{proof}
\cite[Cor.~4.1.4(a)]{Last} gives the results for finite positive measures. Fact~\ref{fact2} then extends the result to positive measures, while Lemma~\ref{lem2} proves the general version.
\end{proof}

We now have the following simple result.
\begin{lemma} $P_{pd}$ and $P_{zd}$ are FCDM functions.
\end{lemma}
\begin{proof}
It is obvious that these functions take finite measures to finite measures. Next, let $\omega= \sum_{n=1}^\infty c_n \delta_{t_n}$ be a pure point measure, and let $\nu$ be a finite measure.

If $\nu$ is zero-dimensional, then $\omega*\nu$ is supported inside $\bigcup_{n} (t_n+\supp(\nu))$, which is zero-dimensional by \eqref{eq:dh}.

Similarly, if $\nu$ is positive-dimensional, then for all Borel sets $S$ with $\dim_{H}(S)=0$, we have
\[
|\omega*\nu| \leq \sum_{n=1}^\infty |c_n| |\nu|(-t_n+S) =0 \,,
\]
and hence $\omega*\nu$ is positive-dimensional. Therefore,
\[
P_{pd}(\omega*\nu)+ P_{zd}(\omega*\nu)=\omega*\nu = \omega*(P_{pd} \nu)+ \omega*(P_{zd} \nu)
\]
give two decompositions of $\omega*\nu$, into a positive-dimensional and a zero-dimensional measure, respectively. The uniqueness of the decomposition proves the claim.
\end{proof}

Now, Theorem~\ref{FCDM} gives:

\begin{corollary} Let $(G,H,\cL)$ be a CPS, let $W \subseteq H$ be any compact set, and let $\omega$ be a ping-pong measure for $\oplam(W)$.
Then, for each Fourier transformable measure $\gamma$, with $\supp(\gamma) \subseteq \oplam(W)$, there exists (unique) Fourier transformable measures $\gamma_{1},\gamma_{2}$, and some function $f_{1},f_{2}\in B(G)$, such that:
\begin{itemize}
  \item[(a)] $\displaystyle \widehat{\gamma_{1}}=P_{pd}( \widehat{\gamma})$ and $\widehat{\gamma_{2}}=P_{zd}( \widehat{\gamma}) \,.$
  \item[(b)]$ \displaystyle \gamma_{1} = f_1 \omega$ and $ \displaystyle \gamma_{2} = f_2 \omega$.
  \item[(c)] $\supp(\gamma_{1}), \supp(\gamma_{2}) \subseteq \oplam(W)$.\qed
\end{itemize}
\end{corollary}

It is likely that similar results can be proved for most decompositions of \cite[Section~4]{Last}.

\section{Outlook}

The study of Meyer sets via covering model sets is not a new idea in the field of aperiodic order. This idea already plays a central role in the characterisation of Meyer sets (see \cite{Meyer,MOO,NS11} for example) and comes up in a natural way in the Pisot conjecture. Any 1-dimensional primitive Pisot substitution gives raise to a cut-and-project scheme and to an iterated function system (IFS) on the internal space. Using the attractors of the IFS as windows, we get covering model sets for the left-end points of each tile type in the fixed point of the substitution (see \cite[Chapter 7]{TAO} or \cite{BG3} for some examples and \cite{Sin} for a systematic exposition of 1-dimensional Pisot substitutions). The covering model sets are regular and hence have pure point diffraction spectra. If the inflation factor is an irrational Pisot number, the Pisot conjecture is equivalent to the fact that the fixed point of the substitution and the covering model set differ on a set of density zero. On another hand, when the inflation factor is an integer it is possible for the fixed point to have mixed spectrum, compare \cite{BG3}.

\smallskip

There are some limitations when studying the diffraction of a weak Meyer sets via covering model sets. The results of the paper can be understood the following equivalent way: starting with a (regular) model set $\oplam(W)$ with compact window $W$, we can describe
common properties for all diffraction measures of all subsets of $\oplam(W)$. The set $\oplam(W)$ contains many subsets, some which have pure point spectrum, some with mixed spectrum, but also many subsets $\Lambda$ which have multiple different diffraction measures of distinct spectral type. The last situation occurs often in the case of
maximal density weak model sets with windows of empty interior,such as the square free integers and visible points of the lattice. It would be interesting to classify all the subsets of a given model set and all the averaging sequences which lead to a diffraction measure of a certain spectral type, but this seems to be an extremely difficult question.

\subsection*{Acknowledgments}  The work was supported by NSERC with grant 2020-00038, and we are greatly thankful for all the support. The generalisation to FCDM functions, and its application were suggested by Daniel Lenz, and we are grateful for his insightful feedback. We would like to thank Anna Klick for creating Figure~\ref{fib1} in Maple, and for some suggestions which improved the quality of the manuscript. We would like to thank Michael Baake for many helpful suggestions, and Adam Humeniuk for carefully reading the manuscript and for many suggestions which helped improve the paper. Finally, we would like to thank the two anonymous referees for numerous suggestions which improved the quality of this manuscript.

\end{document}